\newcommand{\ga}{\gamma}
\newcommand{\de}{\delta}
\newcommand{\la}{\lambda}
\newcommand{\om}{\omega}
\newcommand{\eps}{\varepsilon}
\newcommand{\iy}{\infty}
\theoremstyle{plain}
\numberwithin{equation}{section}
\newtheorem{thm}{Theorem}[section]
\newtheorem{lem}[thm]{Lemma}
\newtheorem{prop}[thm]{Proposition}
\theoremstyle{definition}
\newtheorem{alg}[thm]{Algorithm}
\newtheorem{ip}[thm]{Inverse Problem}
\theoremstyle{remark}
\DeclareMathOperator*{\Res}{Res}
\begin{document}

\begin{center}
{\large\bf Inverse Sturm-Liouville problem with analytical functions in the boundary condition}
\\[0.2cm]
{\bf Natalia P. Bondarenko} \\[0.2cm]
\end{center}

\vspace{0.5cm}

{\bf Abstract.} The inverse spectral problem is studied for the Sturm-Liouville operator with a complex-valued potential and arbitrary entire functions in one of the boundary conditions. We obtain necessary and sufficient conditions for uniqueness, and develop a constructive algorithm for the inverse problem solution. The main results are applied to the Hochstadt-Lieberman half-inverse problem. As an auxiliary proposition, we prove local solvability and stability for the inverse Sturm-Liouville problem by the Cauchy data in the non-self-adjoint case.
  
\medskip

{\bf Keywords:} inverse spectral problem; Sturm-Liouville operator; analytical dependence on the spectral parameter; uniqueness; constructive solution.

\medskip

{\bf AMS Mathematics Subject Classification (2010):} 34A55 34B07 34B09 34B24 34L40

\vspace{1cm}

\section{Introduction}

The paper aims to solve the inverse spectral problem for the following boundary value problem
\begin{gather} \label{eqv}
    -y''(x) + q(x) y(x) = \la y(x), \quad x \in (0, \pi), \\ \label{bc}
    y(0) = 0, \quad f_1(\la) y'(\pi) + f_2(\la) y(\pi) = 0.
\end{gather}

Here \eqref{eqv} is the Sturm-Liouville equation with the complex-valued potential $q \in L_2(0, \pi)$. The boundary condition at $x = \pi$ contains arbitrary functions $f_j(\la)$, $j = 1, 2$, analytical by the spectral parameter $\la$ in the whole complex plane.
The Sturm-Liouville equation~\eqref{eqv} arises in investigation of wave propagation in various media, heating processes, electron motion, etc.

The case of constant coefficients $f_1$ and $f_2$ has been studied fairly completely (see the classical monographs \cite{Mar77, Lev84, PT87, FY01} and references therein). There is also a number of studies concerning inverse problems for Sturm-Liouville operators with linear \cite{BS97, Yur00, Gul05, YH10} and polynomial \cite{Chug01, FY10, FY12, YX15, Gul19} dependence on the spectral parameter in the boundary conditions.

In this paper, we study the Sturm-Liouville problem with arbitrary entire functions in the boundary condition. Let $\{ \la_n \}_{n = 1}^{\iy}$ be a subsequence of the eigenvalues of the problem $L(q)$. This subsequence may coincide with the whole spectrum or not. Note that the behavior of the spectrum depends very much on the functions $f_j(\la)$, $j = 1, 2$. 
Since no additional restrictions are imposed on these functions, we cannot investigate certain properties of the spectrum. Nevertheless, we can study the following inverse problem under some additional restrictions on the subspectrum $\{ \la_n \}_{n = 1}^{\iy}$.

\begin{ip} \label{ip:main}
Let the entire functions $f_j(\la)$, $j = 1, 2$, be known a priori. Given the eigenvalues $\{ \la_n \}_{n = 1}^{\iy}$ and the number $\om := \frac{1}{2}\int_0^{\pi} q(x) \, dx$, find the potential $q$.
\end{ip}

Investigation of this problem is motivated by several applications. In recent years, the so-called {\it partial} inverse problems have attracted much attention of scholars. In such problems, it is assumed that coefficients of differential expressions (e.g., the Sturm-Liouville potential $q(x)$) are known a priori on a part of an interval. Therefore less spectral data are required to recover the unknown part of coefficients. A significant part of those partial inverse problems can be reduced to Inverse Problem~\ref{ip:main} for the operator with analytical dependence on the spectral parameter in the boundary conditions. We provide an example of such reduction for the Hochstadt-Lieberman problem~\cite{HL78} in Section~5. Recently partial inverse problems have been intensively studied for Sturm-Liouville operators with discontinuities (see \cite{Hald84, SY08, Yang14, Wang15, YB19}). The latter operators arise in geophysics and electronics. Partial inverse problems have also been investigated for differential operators on geometrical graphs (see \cite{Piv00, Yang10, YW17, Bond18, Bond-tamk}). Such operators model wave propagation through a domain being a thin neighborhood of a graph and have applications in various branches of science and engineering (see \cite{BCFK06}). Another popular problem is the inverse transmission eigenvalue problem arising in acoustics (see \cite{MP94, MPS94, MSS97, BB17}). The results of the present paper generalize many known results on the mentioned inverse problems. Note that, in certain applications, the constant $\om$ can be obtained from the eigenvalue asymptotics (e.g., see Section~4).

In this paper, we obtain necessary and sufficient conditions for uniqueness of Inverse Problem~\ref{ip:main} solution and develop a constructive algorithm for solving this inverse problem. This algorithm will be used in our future study \cite{Bond-future} for investigation of solvability and stability for Inverse Problem~\ref{ip:main}. Further this theory can be generalized to other types of differential operators and pencils.

Our method is based on completeness and basisness of special vector-functional sequences in appropriate Hilbert spaces. This method allows us to reduce Inverse Problem~\ref{ip:main} to the classical Sturm-Liouville inverse problem with constant coefficients in the boundary conditions. In contrast to the majority of the studies on inverse Sturm-Liouville problems, our analysis does not require self-adjointness of the operator. 
We investigate the most general case, when the potential $q(x)$ is complex-valued and the given eigenvalues can be multiple. For solving the inverse Sturm-Liouville problem with boundary conditions independent of the spectral parameter, we rely 
on the inverse problem theory for non-self-adjoint Sturm-Liouville operators developed in \cite{FY01, But07, BSY13}.

The paper is organized as follows. In Section~2, we introduce the notations, and formulate the main results, in particular, necessary and sufficient conditions for uniqueness of solution (Theorems~\ref{thm:uniq} and~\ref{thm:nec}) and Algorithm~\ref{alg:ip} for constructive solution of the inverse problem. 
We also provide some simple conditions on the subspectrum $\{ \la_n \}_{n = 1}^{\iy}$ sufficient for uniqueness and for constructive solution (see Theorem~\ref{thm:cond}).
The main theorems are proved in Section~3. In Section~4, we apply our results to the Hochstadt-Lieberman problem. 
In Appendix, Theorem~\ref{thm:loc} on local solvability and stability is proved for the inverse Sturm-Liouville problem by Cauchy data. This result plays an auxiliary role in analysis of Inverse Problem~\ref{ip:main}. However, as far as we know, Theorem~\ref{thm:loc} is new for the case of the complex-valued potential $q(x)$ and so can be treated as a separate result.

\section{Main results}

Let us start with some preliminaries.
Denote by $S(x, \la)$ the solution of equation~\eqref{eqv}, satisfying the initial conditions $S(0, \la) = 0$, $S'(0, \la) = 1$. Here and below the prime stands for the derivative by $x$.
For derivatives by $\la$, we use the following notation: 
$$
f^{<j>}(\la) = \frac{1}{j!}\frac{d^j}{d \la^j} f(\la), \quad j \ge 0.
$$

The spectrum of $L(q)$ consists of eigenvalues, which coincide with the zeros the the characteristic function
\begin{equation} \label{defD}
\Delta(\la) := f_1(\la) S'(\pi, \la) + f_2(\la) S(\pi, \la).
\end{equation}
Clearly, the function $\Delta(\la)$ is entire in $\la$-plane.

Consider a subsequence $\{ \la_n \}_{n = 1}^{\iy}$ of the spectrum. Any multiple eigenvalue can appear in the sequence $\{ \la_n \}_{n = 1}^{\iy}$ a number of times not exceeding its multiplicity. By the eigenvalue multiplicity we mean the multiplicity of the corresponding zero of the analytic function $\Delta(\la)$. In other words, if for some $\mu$ we have $\# \{ n \in \mathbb N \colon \la_n = \mu \} = k$, then
$\Delta^{<j>}(\mu) = 0$, $j = \overline{0, k-1}$. We call such a sequence $\{ \la_n \}_{n = 1}^{\iy}$ \textit{a subspectrum} of $L(q)$.

Let us add to the the given subspectrum the value $\la_0 := 0$. Define
\begin{equation} \label{defI}
I := \{ n \ge 0 \colon \la_n \ne \la_k, \, \forall k \colon 0 \le k < n \}, \quad
m_n := \# \{ k \ge 0 \colon \la_k = \la_n \},
\end{equation}
i.e. $I$ is the index set of all the distinct values among $\{ \la_n \}_{n = 0}^{\iy}$ and $m_n$ is the multiplicity of $\la_n$ for $n \in I$. Without loss of generality, we assume that the equal eigenvalues are consecutive: $\la_n = \la_{n + 1} = \dots = \la_{n + m_n - 1}$ for all $n \in I$.

Define the functions
$$
s(x,\la) = \sqrt{\la} \sin (\sqrt{\la} x), \quad c(x, \la) = \cos (\sqrt{\la} x).
$$
Obviously, the functions $\la^{-1} s(x, \la)$ and $c(x, \la)$ are entire by $\la$ for each fixed $x \in [0, \pi]$. Define $\eta_1(\la) := S(\pi, \la)$, $\eta_2(\la) := S'(\pi, \la)$. Further we need the following standard relations, which can be obtained by using the transformation operator (see \cite{Mar77, FY01, MP10}):
\begin{align} \label{intS}
    \eta_1(\la) = \frac{s(\pi, \la)}{\la} - \frac{\om c(\pi, \la)}{\la} + \frac{1}{\la} \int_0^{\pi} K(t) c(t, \la) \, dt, \\ \label{intSp}
    \eta_2(\la)  = c(\pi, \la) + \frac{\om s(\pi, \la)}{\la} + \frac{1}{\la} \int_0^{\pi} N(t) s(t, \la) \, dt,
\end{align}
where $K, N \in L_2(0, \pi)$. The pair of functions $\{ K, N \}$ is called \textit{the Cauchy data} of the potential $q$. Consider the following auxiliary inverse problem.

\begin{ip} \label{ip:cd}
Given the Cauchy data $\{ K, N \}$ and the number $\om$, find the potential $q$.
\end{ip}

Using the Cauchy data $\{ K, N \}$ and $\om$, one can easily construct the Weyl function $M(\la) := \frac{\eta_2(\la)}{\eta_1(\la)}$. It is well-known that the potential $q$ can be uniquely recovered from the Weyl function, e.g., by the method of spectral mappings (see \cite{FY01, But07, BSY13}).

Proceed to solution of Inverse Problem~\ref{ip:main}. Substituting \eqref{intS} and~\eqref{intSp} into~\eqref{defD}, we 
get
\begin{align} \nonumber
\la \Delta(\la) & = f_1(\la) \left(\la c(\pi, \la) + \om s(\pi, \la) + \int_0^{\pi} N(t) s(t, \la) \, dt\right) \\ \label{rel1} & + f_2(\la) \left( s(\pi, \la) - \om c(\pi, \la) + \int_0^{\pi} K(t) c(t, \la) \, dt\right).
\end{align}

Introduce the complex Hilbert space of vector-functions:
$$
\mathcal H := L_2(0, \pi) \oplus L_2(0, \pi) = \left\{ h = [h_1, h_2] \colon h_j \in L_2(0, \pi), \, j = 1, 2 \right\}
$$
with the following scalar product and the norm:
\begin{gather*}
(g, h)_{\mathcal H} := \int_0^{\pi} (\overline{g_1(t)} h_1(t) + \overline{g_2(t)} h_2(t)) \, dt, \quad
\| h \|_{\mathcal H} = \sqrt{(h, h)_{\mathcal H}}, \\ g, h \in \mathcal H, \quad g = [g_1, g_2], \quad h = [h_1, h_2].
\end{gather*}
Define the vector-functions
\begin{equation} \label{defv}
u(t) := [\overline{N(t)}, \overline{K(t)}], \quad v(t, \la) := [f_1(\la) s(t, \la), f_2(\la) c(t, \la)].
\end{equation}
Clearly, $u(.)$ and $v^{<\nu>}(., \la)$ for each fixed $\la$ and $\nu \ge 0$ belong to $\mathcal H$. In view of our notations, the relation~\eqref{rel1} can be rewritten in the form
\begin{gather} \nonumber
(u(t), v(t, \la))_{\mathcal H} = \la \Delta(\la) + w(\la), \\ \label{defw}
w(\la) := -f_1(\la) (\la c(\pi, \la) + \om s(\pi, \la)) - f_2(\la) (s(\pi, \la) - \om c(\pi, \la)).
\end{gather}
Here $t$ is the variable of integration in the scalar product.
Since
\begin{equation} \label{derD}
(\la \Delta(\la))^{<\nu>}_{|\la = \la_n} = 0, \quad n \in I, \quad \nu = \overline{0, m_n-1}, 
\end{equation}
we get
\begin{equation} \label{rel2}
    (u(t), v^{<\nu>}(t, \la_n))_{\mathcal H} = w^{<\nu>}(\la_n), \quad n \in I, \quad \nu = \overline{0, m_n-1}.
\end{equation}
Denote
\begin{gather} \label{defvn1}
v_{n+\nu}(t) := v^{<\nu>}(t, \la_n), \quad w_{n + \nu} := w^{<\nu>}(\la_n), \quad n \in I, \quad \nu = \overline{0, m_n-1}, \quad n + \nu \ge 1, \\ \label{defvn2}
v_0(t) := [0, 1], \quad w_0 := \om.
\end{gather}
Finally, we get
\begin{equation}\label{scal}
    (u, v_n)_{\mathcal H} = w_n, \quad n \ge 0.
\end{equation}
The relation~\eqref{scal} for $n \ge 1$ follows from~\eqref{rel2}. For $n = 0$, \eqref{scal} follows from~\eqref{intS}, since $S(\pi, \la)$ is analytical at $\la = 0$. In view of~\eqref{defv}, \eqref{defw}, \eqref{defvn1} and~\eqref{defvn2}, the vector-functions $\{ v_n \}_{n = 0}^{\iy}$ and the numbers $\{ w_n \}_{n = 0}^{\iy}$ can be constructed by the given data of Inverse Problem~\ref{ip:main}. The components of $u$ can help to find the unknown potential $q$.

Introduce the following conditions.

\medskip

\textsc{(Complete)} The sequence $\{ v_n \}_{n = 0}^{\iy}$ is complete in $\mathcal H$.

\smallskip

\textsc{(Basis)} The sequence $\{ v_n \}_{n = 0}^{\iy}$ is an unconditional basis in $\mathcal H$.

\medskip

Indeed, \textsc{(Basis)} implies \textsc{(Complete)}.

Along with the problem $L(q)$, we consider the problem $L(\tilde q)$ of the form~\eqref{eqv}-\eqref{bc} with another potential $\tilde q \in L_2(0, \pi)$.
The functions $f_j(\la)$, $j = 1, 2$, are the same for these two problems.
We agree that, if a certain symbol $\ga$ denotes an object related to $L(q)$, the symbol $\tilde \ga$ with tilde denotes the analogous object related to $L(\tilde q)$. Now we are ready to formulate the uniqueness theorem for Inverse Problem~\ref{ip:main}.

\begin{thm} \label{thm:uniq}
Let $\{ \la_n \}_{n = 1}^{\iy}$ and $\{ \tilde \la_n \}_{n = 1}^{\iy}$ be subspectra of the problems $L(q)$ and $L(\tilde q)$, respectively. Suppose that $L(q)$ and $\{ \la_n \}_{n = 0}^{\iy}$ satisfy the condition \textsc{(Complete)}, and let $\la_n = \tilde \la_n$, $n \ge 1$, $\om = \tilde \om$. Then $q = \tilde q$ in $L_2(0, \pi)$.
\end{thm}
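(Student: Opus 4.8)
The plan is to reduce Inverse Problem~\ref{ip:main} to the auxiliary Inverse Problem~\ref{ip:cd}, i.e.\ to show that the hypotheses force the Cauchy data of $q$ and $\tilde q$ to coincide. The starting point is the system of scalar equations~\eqref{scal}. For the problem $L(q)$ we have $(u, v_n)_{\mathcal H} = w_n$ for all $n \ge 0$, where $u = [\overline N, \overline K]$ is built from the Cauchy data of $q$, and the vector-functions $v_n$ and the numbers $w_n$ are constructed purely from the a~priori known entire functions $f_1, f_2$, the subspectrum $\{\la_n\}$, and $\om$. Since $\la_n = \tilde\la_n$ for $n \ge 1$, $\la_0 = \tilde\la_0 = 0$, $\om = \tilde\om$, and the $f_j$ are the same, the sequences $\{v_n\}_{n\ge 0}$ and $\{w_n\}_{n\ge 0}$ are \emph{identical} for the two problems. (One must note here that the multiplicities $m_n$ entering~\eqref{defvn1} depend only on the given $\la_n$'s via~\eqref{defI}, not on $q$, which is why the $v_n$ agree; and that~\eqref{derD}, hence~\eqref{scal}, holds for $\tilde q$ as well because $\{\tilde\la_n\}$ is by hypothesis a subspectrum of $L(\tilde q)$.) Therefore $\tilde u := [\overline{\tilde N}, \overline{\tilde K}]$ also satisfies $(\tilde u, v_n)_{\mathcal H} = w_n$ for all $n \ge 0$.

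Subtracting the two systems gives $(u - \tilde u, v_n)_{\mathcal H} = 0$ for every $n \ge 0$. By hypothesis $L(q)$ and $\{\la_n\}_{n\ge 0}$ satisfy \textsc{(Complete)}, i.e.\ $\{v_n\}_{n\ge 0}$ is complete in $\mathcal H$. A vector orthogonal to a complete system is zero, so $u = \tilde u$ in $\mathcal H$, which componentwise reads $K = \tilde K$ and $N = \tilde N$ in $L_2(0,\pi)$. Thus $q$ and $\tilde q$ have the same Cauchy data. Together with $\om = \tilde\om$, this is exactly the data of Inverse Problem~\ref{ip:cd}.

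It remains to invoke uniqueness for Inverse Problem~\ref{ip:cd}. As recalled right after its statement in the excerpt, from $\{K, N\}$ and $\om$ one reconstructs $\eta_1(\la) = S(\pi,\la)$ and $\eta_2(\la) = S'(\pi,\la)$ via~\eqref{intS}--\eqref{intSp}, hence the Weyl function $M(\la) = \eta_2(\la)/\eta_1(\la)$; and the Weyl function determines $q$ uniquely by the classical inverse theory for non-self-adjoint Sturm--Liouville operators (\cite{FY01, But07, BSY13}). Applying this to both $q$ and $\tilde q$, which share the same Cauchy data and the same $\om$, yields $q = \tilde q$ in $L_2(0,\pi)$, completing the proof.

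The argument is short once the setup is in place, and I do not expect a serious obstacle in the deduction itself. The one point deserving care is the verification that the constructed sequences $\{v_n\}$, $\{w_n\}$ genuinely coincide for $L(q)$ and $L(\tilde q)$: this hinges on the fact that the index set $I$ and the multiplicities $m_n$ are defined in~\eqref{defI} solely from the numerical sequence $\{\la_n\}_{n\ge 0}$, and on the convention that equal eigenvalues are listed consecutively, so that the $\la$-derivatives $v^{<\nu>}(t,\la_n)$ and $w^{<\nu>}(\la_n)$ are taken at the same points to the same orders. The genuinely ``deep'' input — uniqueness of recovery of $q$ from its Weyl function without self-adjointness — is black-boxed from the cited literature; all the work specific to this paper consists in the completeness hypothesis doing its job of collapsing $u - \tilde u$ to zero.
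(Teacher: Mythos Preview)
Your proof is correct and follows essentially the same route as the paper's own proof: identify $v_n = \tilde v_n$ and $w_n = \tilde w_n$ from the shared data, subtract the two instances of~\eqref{scal}, invoke \textsc{(Complete)} to get $u = \tilde u$, and finish with uniqueness for Inverse Problem~\ref{ip:cd}. Your added remarks on why the $v_n$, $w_n$ coincide (dependence of $I$ and $m_n$ only on the numerical sequence $\{\la_n\}$) make explicit what the paper leaves implicit, but the argument is the same.
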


The following theorem asserts that the condition \textsc{(Complete)} is not only sufficient but also necessary for uniqueness of solution of Inverse Problem~\ref{ip:main}.

\begin{thm} \label{thm:nec}
Let $\{ \la_n \}_{n = 1}^{\iy}$ be a subspectrum of the problem $L(q)$. Suppose that the sequence $\{ v_n \}_{n = 0}^{\iy}$ is incomplete in $\mathcal H$. Then there exists a complex-valued function $\tilde q \in L_2(0, \pi)$, $\tilde q \ne q$ such that $\om = \tilde \om$ and $\{ \la_n \}_{n = 1}^{\iy}$ is a subspectrum of $L(\tilde q)$.
\end{thm}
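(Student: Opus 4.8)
The plan is to use the incompleteness of $\{v_n\}_{n=0}^{\iy}$ to produce a nonzero vector-function $h \in \mathcal H$ orthogonal to every $v_n$, and then to perturb the Cauchy data $\{K, N\}$ of $q$ in the direction of $h$ by a small parameter, checking that the perturbed Cauchy data still corresponds to an honest $L_2$ potential whose subspectrum contains $\{\la_n\}_{n=1}^{\iy}$ and whose $\om$ is unchanged. Concretely, since $\{v_n\}$ is incomplete, there is $h = [h_1, h_2] \in \mathcal H$, $h \ne 0$, with $(h, v_n)_{\mathcal H} = 0$ for all $n \ge 0$. Write $h$ in the form $h = [\overline{\nu_2}, \overline{\nu_1}]$ for suitable $\nu_1, \nu_2 \in L_2(0,\pi)$ (just relabeling the two components as in~\eqref{defv}), and for $t \in \mathbb C$ set $K_t := K + t\,\nu_1$, $N_t := N + t\,\nu_2$, with $\om_t := \om$ fixed. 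The orthogonality to $v_0 = [0,1]$ forces $\int_0^\pi \nu_1(t)\,dt$-type normalization to be consistent with $\om_t = \om$; more precisely $(h, v_0)_{\mathcal H} = 0$ is exactly the statement that the perturbation does not disturb the $\om$-component, so $w_0$ is preserved.

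Next I would invoke the local solvability and stability result for the inverse problem by Cauchy data, Theorem~\ref{thm:loc} (proved in the Appendix): for $|t|$ small enough the pair $\{K_t, N_t\}$ together with $\om$ is the Cauchy data of some potential $q_t \in L_2(0,\pi)$, depending continuously on $t$, with $q_0 = q$. By the analyticity built into relations~\eqref{intS}--\eqref{intSp}, the functions $\eta_1^t(\la) = S^t(\pi,\la)$, $\eta_2^t(\la) = (S^t)'(\pi,\la)$ associated to $q_t$ are entire in $\la$ and their characteristic function is
$$
\Delta_t(\la) = f_1(\la)\eta_2^t(\la) + f_2(\la)\eta_1^t(\la).
$$
Retracing the computation~\eqref{rel1}--\eqref{scal} that led from the Cauchy data to the relations $(u, v_n)_{\mathcal H} = w_n$, but now with $u$ replaced by $u_t := [\overline{N_t}, \overline{K_t}] = u + t\,h$ and the same $\{v_n\}$, $\{w_n\}$ (which depend only on $f_j$, $\om$, and the fixed $\{\la_n\}$, hence are unchanged), I get
$$
(u_t, v_n)_{\mathcal H} = (u, v_n)_{\mathcal H} + t\,(h, v_n)_{\mathcal H} = w_n + 0 = w_n, \quad n \ge 0.
$$
The identity $(u_t, v_{n+\nu})_{\mathcal H} = w_{n+\nu}$ for the indices coming from a fixed $\la_n$ is precisely $(\la \Delta_t(\la))^{<\nu>}|_{\la = \la_n} = 0$ for $\nu = \overline{0, m_n - 1}$; since $\la_n \ne 0$ for $n \ge 1$ (we may assume this, and if some $\la_n = 0$ it is handled by the $n=0$ slot), this says $\Delta_t^{<\nu>}(\la_n) = 0$, i.e.\ $\la_n$ is an eigenvalue of $L(q_t)$ of multiplicity at least its multiplicity in $\{\la_n\}$. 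Hence $\{\la_n\}_{n=1}^{\iy}$ is a subspectrum of $L(q_t)$ for every small $t$, and $\om_t = \om$.

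It remains to ensure $q_t \ne q$ for some small $t \ne 0$. If $h \notin \{[0,c] : c \in \mathbb C\}$ did not already force $q_t \ne q$ directly, one argues by contradiction: if $q_t = q$ on a whole interval of $t$-values near $0$, then $\{K_t, N_t\}$ would be independent of $t$ (the Cauchy data is uniquely determined by $q$ and $\om$ via~\eqref{intS}--\eqref{intSp}), forcing $h = 0$, a contradiction; note the orthogonality $(h, v_0)_{\mathcal H} = 0$ already rules out that $h$ lies in the one-dimensional span of $v_0 = [0,1]$, so $h$ genuinely perturbs the $L_2$-part of the Cauchy data. The main obstacle is the application of Theorem~\ref{thm:loc}: one must check that $\{K_t, N_t, \om\}$ actually lies in the (open) set of admissible Cauchy data — i.e.\ that an arbitrary small $L_2$-perturbation of genuine Cauchy data is again genuine Cauchy data of some $L_2$ potential — and this is exactly the content of the local solvability statement, which is why that theorem is placed in the Appendix as the technical engine. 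Once that is granted, the rest is the bookkeeping above.
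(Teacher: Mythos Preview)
Your proposal is correct and follows the same approach as the paper: pick a nonzero $\hat u \in \mathcal H$ orthogonal to every $v_n$, scale it to have norm at most the $\eps$ of Theorem~\ref{thm:loc}, set $\tilde u = u + \hat u$, invoke Theorem~\ref{thm:loc} to produce $\tilde q$ with the perturbed Cauchy data and the same $\om$, and then read off from $(\tilde u, v_n)_{\mathcal H} = w_n$ that $\la\tilde\Delta(\la)$ vanishes to the required orders at each $\la_n$. Your continuous parameter $t$ and the discussion of the span of $v_0$ are unnecessary embellishments---the paper simply observes that $\tilde u \ne u$ forces $\tilde q \ne q$ by the uniqueness of Inverse Problem~\ref{ip:cd}.
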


Suppose that the condition \textsc{(Basis)} holds. Then one can constructively solve Inverse Problem~\ref{ip:main}, by using the following algorithm.

\begin{alg} \label{alg:ip}
Let eigenvalues $\{ \la_n \}_{n = 1}^{\iy}$ and the number $\om$ be given. We need to find the potential $q$.
\begin{enumerate}
    \item Using $f_j(\la)$, $j = 1, 2$, $\{ \la_n \}_{n = 0}^{\iy}$ and $\om$, construct the vector-functions $\{ v_n \}_{n = 0}^{\iy}$ and the numbers $\{ w_n \}_{n = 0}^{\iy}$ via~\eqref{defv}, \eqref{defw}, \eqref{defvn1} and~\eqref{defvn2}.
    \item For the basis $\{ v_n \}_{n = 0}^{\iy}$, find the biorthonormal basis $\{ v_n^* \}_{n = 0}^{\iy}$, i.e. $(v_n, v_k^*)_{\mathcal H} = \de_{nk}$, $n, k \ge 0$, where $\de_{nk}$ is the Kronecker delta.
    \item Construct the element $u \in \mathcal H$, satisfying~\eqref{scal}, by the formula
    $$
    u = \sum_{n = 0}^{\iy} \overline{w_n} v_n^*.
    $$
    \item Using the components of $u(t) = [\overline{N(t)}, \overline{K(t)}]$, solve Inverse Problem~\ref{ip:cd} and find $q$.
\end{enumerate}
\end{alg}

In certain applications, it can be difficult to check the conditions~\textsc{(Complete)} and~\textsc{(Basis)}. Therefore we introduce some other conditions, sufficient for uniqueness and for constructive solution of Inverse Problem~\ref{ip:main}.

\medskip

\textsc{(Complete2)} The sequence $\{ c^{<\nu>}(t, \la_n) \}_{n \in I, \, \nu = \overline{0, m_n-1}}$ is complete in $L_2(0, 2\pi)$.

\smallskip

\textsc{(Basis2)} The sequence $\{ c^{<\nu>}(t, \la_n) \}_{n \in I, \, \nu = \overline{0, m_n-1}}$ is a Riesz basis in $L_2(0, 2\pi)$.

\smallskip

\textsc{(Separation)} For every $n \ge 0$, we have $f_1(\la_n) \ne 0$ or $f_2(\la_n) \ne 0$.

\smallskip

\textsc{(Simple)} There exists an integer $n_0$ such that $m_n = 1$ and $\la_n \ne 0$ for $n \ge n_0$.

\smallskip

\textsc{(Asymptotics)} $\mbox{Im}\, \rho_n = O(1)$, $n \to \iy$, and $\{ \rho_n^{-1} \}_{n \ge n_0} \in l_2$, where $\rho_n := \sqrt{\la_n}$, $\arg \rho_n \in \left[ -\tfrac{\pi}{2}, \tfrac{\pi}{2}\right)$.

\medskip

These conditions are natural for applications, such as the Hochstadt-Lieberman problem (see Section~5), transmission inverse eigenvalue problem, inverse problems for quantum graphs, etc.

\begin{thm} \label{thm:cond}
(i) \textsc{(Separation)} and \textsc{(Complete2)}  together imply \textsc{(Complete)};
(ii) \textsc{(Separation)}, \textsc{(Simple)}, \textsc{(Asymptotics)} and~\textsc{(Basis2)} together imply~\textsc{(Basis)}.
\end{thm}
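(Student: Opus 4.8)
My plan is to deduce both implications from the corresponding facts about the cosine family on the doubled interval $(0,2\pi)$, the bridge being an algebraic identity relating $\Delta$, $\eta_1=S(\pi,\cdot)$, $\eta_2=S'(\pi,\cdot)$ to a ``defect'' function attached to an element of $\mathcal H$. For $a=[a_1,a_2]\in\mathcal H$ I set $\tilde g_1(\la):=\int_0^\pi a_1(t)s(t,\la)\,dt$, $\tilde g_2(\la):=\int_0^\pi a_2(t)c(t,\la)\,dt$ (entire in $\la$) and $\Phi_a(\la):=f_1(\la)\tilde g_1(\la)+f_2(\la)\tilde g_2(\la)$, so that — in the bilinear pairing dual to the scalar product of $\mathcal H$ — one has $\langle a,v^{<\nu>}(\cdot,\la_n)\rangle=\Phi_a^{<\nu>}(\la_n)$, while $\langle a,v_0\rangle=\int_0^\pi a_2$. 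From \eqref{defD}, $\Delta=f_1\eta_2+f_2\eta_1$, and $\eta_1,\eta_2$ have no common zero (a common zero $\mu$ would force $S(\cdot,\mu)\equiv0$, contradicting $S'(0,\mu)=1$). The identity I will exploit is obtained by eliminating $f_1$ and $f_2$ from $\Phi_a=f_1\tilde g_1+f_2\tilde g_2$ and $\Delta=f_1\eta_2+f_2\eta_1$:
\[
f_1 Q_a=\eta_1\Phi_a-\tilde g_2\Delta,\qquad -f_2 Q_a=\eta_2\Phi_a-\tilde g_1\Delta,\qquad Q_a:=\eta_1\tilde g_1-\eta_2\tilde g_2 .
\]

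\emph{For part (i)} I would argue as follows. Let $a\perp\{v_n\}_{n\ge0}$; then $\int_0^\pi a_2=0$ and $\Phi_a$ vanishes on the subspectrum $\{\la_n\}_{n\ge0}$ with the multiplicities $m_n$ (at $\la=0$ the missing order is supplied by $\Phi_a(0)=f_2(0)\int_0^\pi a_2=0$). Since $\Delta$ vanishes on the same data — with order $\ge m_0-1$ at $0$ — the displayed identities show that $f_1Q_a$ and $f_2Q_a$ vanish to order $\ge m_n$ at each $\la_n$ (at $0$ one uses $\tilde g_1(0)=\tilde g_2(0)=0$ to absorb the lost order of $\Delta$); by \textsc{(Separation)}, $f_1(\la_n)\neq0$ or $f_2(\la_n)\neq0$, hence $Q_a$ itself vanishes on the subspectrum with multiplicities $m_n$. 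Next, substituting \eqref{intS}--\eqref{intSp} and using $\sin\rho\pi\sin\rho t=\tfrac12[\cos\rho(\pi-t)-\cos\rho(\pi+t)]$, $\cos\rho\pi\cos\rho t=\tfrac12[\cos\rho(\pi-t)+\cos\rho(\pi+t)]$ (with $\rho=\sqrt\la$), a Paley--Wiener computation gives $Q_a(\la)=\int_0^{2\pi}q_*(t)c(t,\la)\,dt$ for some $q_*\in L_2(0,2\pi)$; then \textsc{(Complete2)} forces $q_*=0$, i.e. $Q_a\equiv0$, so $\eta_1\tilde g_1\equiv\eta_2\tilde g_2$. Coprimeness of $\eta_1,\eta_2$ yields $\tilde g_1=\eta_2 r$, $\tilde g_2=\eta_1 r$ with $r$ entire; comparing exponential types and growth on lines $\{\mathrm{Im}\,\rho=\pm A\}$ (where $|\eta_2|\gtrsim e^{A\pi}$ while $|\tilde g_1|\lesssim|\rho|e^{A\pi}$) shows $r$ is an even polynomial in $\rho$ of degree $\le1$, hence constant; finally $\tilde g_1(0)=0$ (as $s(\cdot,0)\equiv0$) and $\eta_2(0)=1$ give $r=0$. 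Thus $\tilde g_1\equiv\tilde g_2\equiv0$, so $a=0$ by completeness of $\{s(\cdot,\la)\},\{c(\cdot,\la)\}$ in $L_2(0,\pi)$; this is \textsc{(Complete)}.

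\emph{For part (ii)}, \textsc{(Simple)} lets me work with the simple tail (only finitely many $\la_n$ are multiple, a finite-rank modification), where $v_n=[f_1(\la_n)s(\cdot,\la_n),f_2(\la_n)c(\cdot,\la_n)]\neq0$ by \textsc{(Separation)}. First I would check that $\{v_n/\|v_n\|\}$ is a Bessel sequence: Cauchy--Schwarz applied to the pair $(f_1(\la_n)s(\cdot,\la_n),f_2(\la_n)c(\cdot,\la_n))$ makes the factors $f_j(\la_n)$ cancel, so $|\langle h,v_n\rangle|^2/\|v_n\|^2\le|\langle h_1,s(\cdot,\la_n)\rangle|^2/\|s(\cdot,\la_n)\|^2+|\langle h_2,c(\cdot,\la_n)\rangle|^2/\|c(\cdot,\la_n)\|^2$, and the sums over $n$ are finite because \textsc{(Basis2)} makes $\{\rho_n\}$ separated and \textsc{(Asymptotics)} keeps it in a horizontal strip, so $\{s(\cdot,\la_n)/\|\cdot\|\}$ and $\{c(\cdot,\la_n)/\|\cdot\|\}$ are Bessel in $L_2(0,\pi)$. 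Completeness of $\{v_n\}$ follows from part (i) since \textsc{(Basis2)}$\Rightarrow$\textsc{(Complete2)}. What remains is to exhibit a biorthogonal sequence $\{v_n^*\}$ with $\|v_n\|\,\|v_n^*\|$ bounded and $\{\|v_n\|v_n^*\}$ Bessel; I would obtain it by transporting the \textsc{(Basis2)}-biorthogonal system of $L_2(0,2\pi)$ to entire functions $E_n$ with $E_n(\la_m)=\de_{nm}$, and using \textsc{(Separation)} to distribute the interpolation conditions $f_1(\la_m)G_1^n(\la_m)+f_2(\la_m)G_2^n(\la_m)=\de_{nm}$ between the two Paley--Wiener classes (of $\tilde g_1$- and $\tilde g_2$-type), the asymptotics of $\Delta,\eta_1,\eta_2$ controlled by \textsc{(Asymptotics)} giving the norm bounds. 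The standard Hilbert-space criterion — a complete, minimal sequence whose normalization and whose biorthogonal normalization are both Bessel is an unconditional (indeed Riesz) basis — then yields \textsc{(Basis)}.

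The step I expect to be the main obstacle is precisely the construction and uniform norm control of the biorthogonal system $\{v_n^*\}$ in part (ii). Since $f_1,f_2$ are arbitrary entire functions — possibly strongly oscillating and with zeros far from $\{\la_n\}$ — there is no single bounded invertible operator carrying the \textsc{(Basis2)}-basis of $L_2(0,2\pi)$ onto $\{v_n\}$, so the biorthogonal functionals have to be produced by a Lagrange-type interpolation split across the two Paley--Wiener classes, and keeping the interpolants in the right class with uniformly bounded norms is where \textsc{(Separation)}, \textsc{(Simple)} and \textsc{(Asymptotics)} all enter simultaneously; this is substantially more delicate than the completeness argument. Part (i), by contrast, should be essentially complete once the Wronskian-type identity above and the Paley--Wiener representation of $Q_a$ are established.
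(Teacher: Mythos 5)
Your part (i) is correct and takes a genuinely different route from the paper's. Where the paper first proves an inductive lemma producing coefficients $C_{n,k}$ with $\eta_j^{<\nu>}(\la_n)=(-1)^{j-1}\sum_{k}C_{n,k}f_j^{<\nu-k>}(\la_n)$ (Lemma~\ref{lem:coef}) and then shows that completeness of $\{v_n\}$ is equivalent to completeness of $\{g_n\}$ with $g(t,\la)=[\eta_1(\la)s(t,\la),-\eta_2(\la)c(t,\la)]$ (Lemma~\ref{lem:complete}), your elimination identities $f_1Q_a=\eta_1\Phi_a-\tilde g_2\Delta$ and $-f_2Q_a=\eta_2\Phi_a-\tilde g_1\Delta$ reach the same function in one stroke ($Q_a=\eta_1\tilde g_1-\eta_2\tilde g_2$ is exactly the paper's $G$) and handle the multiplicities automatically; that is a legitimate simplification. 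Your endgame also differs: the paper deduces $h_2=0$ from $G\equiv0$ by evaluating at the zeros $\{\theta_n\}$ of $\eta_1$ and invoking the Riesz-basis property of $\{c^{<\nu>}(t,\theta_n)\}$ in $L_2(0,\pi)$, while you factor $\tilde g_1=\eta_2r$, $\tilde g_2=\eta_1r$ and kill $r$ by a growth argument. Both work. One small slip: $\eta_2(0)=S'(\pi,0)$ need not equal $1$ and may even vanish; but then $\eta_1(0)\ne0$ and $r(0)=\tilde g_2(0)/\eta_1(0)=0$ because $\int_0^{\pi}a_2\,dt=0$, so your conclusion $r\equiv0$ stands.

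Part (ii) has a genuine gap, and it is exactly the step you flag as ``the main obstacle'': the biorthogonal system $\{v_n^*\}$ with uniform norm control is never constructed, and the Lagrange-type interpolation split between two Paley--Wiener classes is unlikely to close for arbitrary entire $f_1,f_2$, since nothing bounds $f_j$ away from the subspectrum and the target class of the interpolants is not fixed. The paper's way around this is precisely the proportionality you already derived in part (i): on the subspectrum, $[\eta_1(\la_n),\eta_2(\la_n)]=C_{n,0}[f_1(\la_n),-f_2(\la_n)]$ with $C_{n,0}\ne0$, so $g_n=k_nv_n$ with $k_n\ne0$ for $n\ge n_0$ by \textsc{(Simple)}, and the unconditional-basis property of $\{v_n\}$ reduces to the Riesz-basis property of $\{g_n\}$, whose components involve $\eta_1,\eta_2$ with the explicit asymptotics \eqref{intS}--\eqref{intSp} rather than $f_1,f_2$. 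The paper then checks that $\{g_n\}$ is complete (part (i)) and $l_2$-close to the model sequence $g_n^0(t)=[\sin(\tau_n\pi)\sin(\tau_nt),-\cos(\tau_n\pi)\cos(\tau_nt)]$, whose Gram matrix in $\mathcal H$ coincides with that of $\{\cos(\tau_nt)\}$ in $L_2(0,2\pi)$, so \textsc{(Basis2)} transfers directly (Lemma~\ref{lem:gn0}); a complete sequence $l_2$-close to a Riesz basis is a Riesz basis, and no biorthogonal system for $\{v_n\}$ ever needs to be built. You should replace the interpolation plan by this substitution $v_n\mapsto g_n$; your Bessel estimate for $\{v_n/\|v_n\|_{\mathcal H}\}$ then becomes unnecessary.
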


Thus, one can change the condition \textsc{(Complete)} in Theorem~\ref{thm:uniq} to \textsc{(Separation)} and \textsc{(Complete2)} and the condition \textsc{(Basis)} in Algorithm~\ref{alg:ip} to \textsc{(Separation)}, \textsc{(Simple)}, \textsc{(Asymptotics)} and \textsc{(Basis2)}. Those results remain valid.

The condition \textsc{(Separation)} is essential for investigation of Inverse Problem~\ref{ip:main}. If this condition is violated, i.e. $f_1(\la_n) = f_2(\la_n) = 0$ for some $n$, in view of~\eqref{defD}, the eigenvalue $\la_n$ carries no information on the potential $q$. It is easy to check, that \textsc{(Separation)} follows from~\textsc{(Complete)}, so \textsc{(Separation)} is implicitly required in the uniqueness Theorem~\ref{thm:uniq} and in Algorithm~\ref{alg:ip}.

\section{Proofs}

The aim of this section is to prove Theorems~\ref{thm:uniq}, \ref{thm:nec} and~\ref{thm:cond}.

\begin{proof}[Proof of Theorem~\ref{thm:uniq}]
Suppose that the problems $L(q)$, $L(\tilde q)$ and their subspectra $\{ \la_n \}_{n = 1}^{\iy}$, $\{ \tilde \la_n \}_{n = 1}^{\iy}$ satisfy the conditions of Theorem~\ref{thm:uniq}. 
By virtue of the definitions~\eqref{defv}, \eqref{defw}, \eqref{defvn1} and~\eqref{defvn2}, we have $v_n = \tilde v_n$ in $\mathcal H$ and $w_n = \tilde w_n$ for all $n \ge 0$. Hence the relation~\eqref{scal} for $\tilde L$ has the form
\begin{equation} \label{scal2}
    (\tilde u, v_n)_{\mathcal H} = w_n, \quad n \ge 0.
\end{equation}
Subtracting \eqref{scal2} from~\eqref{scal} and using the completeness of the sequence $\{ v_n \}_{n = 0}^{\iy}$, we get $u = \tilde u$ in $\mathcal H$, i.e. $K = \tilde K$, $N = \tilde N$  in $L_2(0, \pi)$. Using the uniqueness of Inverse Problem~\ref{ip:cd} solution, we conclude that $q = \tilde q$ in $L_2(0, \pi)$.
\end{proof}

\begin{proof}[Proof of Theorem~\ref{thm:nec}]
Let the problem $L(q)$ and the subspectrum $\{ \la_n \}_{n = 1}^{\iy}$ be such that the sequence $\{ v_n \}_{n = 0}^{\iy}$ is incomplete in $\mathcal H$. Then there exists $\hat u \in \mathcal H$, $\hat u \ne 0$, such that
\begin{equation} \label{scaluh}
    (\hat u, v_n)_{\mathcal H} = 0, \quad n \ge 0.
\end{equation}
Since the relations~\eqref{scaluh} are linear by $\hat u$, one can choose $\hat u$ satisfying the estimate $\| \hat u \|_{\mathcal H} \le \eps$ for $\eps$ from Theorem~\ref{thm:loc}. Set $u := [\overline{N(t)}, \overline{K(t)}]$, $\tilde u := u + \hat u = [\overline{\tilde N(t)}, \overline{\tilde K(t)}]$, $\tilde u \ne u$. By Theorem~\ref{thm:loc}, there exists $\tilde q \in L_2(0, \pi)$ such that $\om = \tilde \om$ and $\{ \tilde K, \tilde N \}$ are the Cauchy data of $\tilde q$. 
Define the functions
\begin{align*}
    \tilde \eta_1(\la) & := \frac{s(\pi, \la)}{\la} - \frac{\om c(\pi, \la)}{\la} + \frac{1}{\la} \int_0^{\pi} \tilde K(t) c(t, \la) \, dt, \\ \label{intSp}
    \tilde \eta_2(\la) & := c(\pi, \la) + \frac{\om s(\pi, \la)}{\la} + \frac{1}{\la} \int_0^{\pi} \tilde N(t) s(t, \la) \, dt, \\
    \tilde \Delta(\la) & := f_1(\la) \tilde \eta_2(\la) + f_2(\la) \tilde \eta_1(\la).    
\end{align*}
Clearly, $\tilde \Delta(\la)$ is the characteristic function of $L(\tilde q)$.
The relations~\eqref{scal} and~\eqref{scaluh} yield \eqref{scal2}.
Consequently, the function $\la \tilde \Delta(\la)$ has zeros $\{ \la_n \}_{n \in I}$ of the corresponding multiplicities $\{ m_n \}_{n \in I}$. Thus, $\{ \la_n \}_{n = 1}^{\iy}$ is a subspectrum of $L(\tilde q)$, $\tilde q \ne q$.
\end{proof}

In order to prove Theorem~\ref{thm:cond}, we need several auxiliary lemmas.

\begin{lem} \label{lem:coef}
Suppose that \textsc{(Separation)} is fulfilled.
Then there exist coefficients $\{ C_{n,k} \}$ such that the following relations hold:
\begin{equation} \label{coef}
    \eta_j^{<\nu>}(\la_n) = (-1)^{j - 1} \sum_{k = 0}^{\nu} C_{n,k} f_j^{<\nu - k>}, \quad j = 1, 2, 
\end{equation}
for $n \in I \backslash \{ 0 \}$, $\nu = \overline{0, m_n-1}$ and for $n = 0$, $\nu = \overline{0, m_0 - 2}$. 
\end{lem}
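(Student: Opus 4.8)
The plan is to prove Lemma~\ref{lem:coef} by exploiting the fact that $\la \Delta(\la)$ vanishes to order $m_n$ at each $\la_n$, $n \in I$, together with the definition $\Delta(\la) = f_1(\la)\eta_2(\la) + f_2(\la)\eta_1(\la)$. Recall from \eqref{defD} that $\eta_1 = S(\pi,\cdot)$, $\eta_2 = S'(\pi,\cdot)$ are entire, and that the Wronskian-type identity $\eta_2(\la) \cdot (\text{something}) - \eta_1(\la)\cdot(\text{something})$ will be the key algebraic engine: the relevant fact is that $S(\pi,\la)$ and $S'(\pi,\la)$ cannot vanish simultaneously (they are the values at $\pi$ of a nontrivial solution and its derivative, so by uniqueness for the ODE they have no common zero), and more precisely the Wronskian of $S(x,\la)$ with the solution satisfying the other initial data equals $1$. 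This gives a relation $\eta_1(\la) \varphi'(\pi,\la) - \eta_2(\la)\varphi(\pi,\la) = 1$ for a suitable second solution $\varphi$, or equivalently one can work directly with the two explicit representations \eqref{intS}, \eqref{intSp}. The point of the $(-1)^{j-1}$ sign and the appearance of the \emph{same} coefficients $C_{n,k}$ for both $j=1$ and $j=2$ is that $\eta_1$ and $\eta_2$ are being expressed, near $\la_n$, as a common Taylor-coefficient vector times the Taylor coefficients of $f_2$ and $-f_1$ respectively.

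The main steps I would carry out are as follows. First, fix $n \in I$ and abbreviate $m = m_n$. Treat separately the case $\la_n \neq 0$ (so for $n \geq 1$, $n \in I$, $\nu = \overline{0,m-1}$) and the case $\la_n = 0$ (so $n = 0$, $\nu = \overline{0, m_0-2}$; the drop by one in the range is exactly because $\la\Delta(\la)$ has one extra zero at $0$ coming from the explicit factor $\la$, cf.\ \eqref{intS}). Second, from \textsc{(Separation)} we know $f_1(\la_n) \neq 0$ or $f_2(\la_n) \neq 0$; say $f_1(\la_n) \neq 0$ (the other case is symmetric, swapping roles). Then $f_1$ is invertible as a formal power series at $\la_n$, so from $\Delta(\la) = f_1(\la)\eta_2(\la) + f_2(\la)\eta_1(\la) = 0$ to order $m$ at $\la_n$ (when $\la_n \neq 0$; for $\la_n = 0$ use $\la\Delta$), we may solve
\[
\eta_2(\la) = -\frac{f_2(\la)}{f_1(\la)}\,\eta_1(\la) + O((\la-\la_n)^{m}).
\]
Third, set $g(\la) := \eta_1(\la)/f_1(\la)$, a function holomorphic near $\la_n$ (since $f_1(\la_n)\neq 0$), and let $C_{n,k} := g^{<k>}(\la_n)$ be its Taylor coefficients. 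Then $\eta_1 = f_1 \cdot g$ gives, by the Cauchy product of Taylor series, $\eta_1^{<\nu>}(\la_n) = \sum_{k=0}^{\nu} C_{n,k} f_1^{<\nu-k>}(\la_n)$, which is \eqref{coef} for $j=1$ with the $(-1)^{j-1} = 1$ sign; and $\eta_2 = -f_2 \cdot g + O((\la-\la_n)^m)$ gives $\eta_2^{<\nu>}(\la_n) = -\sum_{k=0}^{\nu} C_{n,k} f_2^{<\nu-k>}(\la_n)$ for $\nu \leq m-1$, which is \eqref{coef} for $j=2$ with sign $(-1)^{j-1} = -1$. In the case $f_1(\la_n) = 0$ but $f_2(\la_n) \neq 0$, run the same argument with $g := \eta_2/f_2$ and check the sign bookkeeping works out to the stated $(-1)^{j-1}$.

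The only genuinely delicate point is the case $\la_n = 0$: there one must be careful that it is $\la\Delta(\la)$, not $\Delta(\la)$, that is known to vanish to order $m_0$ at $0$, whereas $\Delta(\la)$ itself — equivalently $f_1 \eta_2 + f_2\eta_1$ — need only vanish to order $m_0 - 1$; this is precisely why \eqref{coef} is asserted only for $\nu = \overline{0,m_0-2}$ when $n=0$. I would handle this by noting that $\eta_1(\la)$ and $\eta_2(\la)$ are holomorphic at $0$ (from \eqref{intS}, \eqref{intSp}, since $s(\pi,\la)/\la$ and the integral terms divided by $\la$ are entire), writing $\Delta = f_1\eta_2 + f_2\eta_1$ and observing $\Delta^{<\nu>}(0) = (\la\Delta)^{<\nu+1>}(0) = 0$ for $\nu = \overline{0,m_0-2}$ by \eqref{derD}, then applying the same power-series division as above. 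A minor technical nuisance is the degenerate situation where $f_1$ (or $f_2$) itself vanishes to some order at $\la_n$: then the naive "solve for $\eta_2$" loses information about high derivatives, but \textsc{(Separation)} guarantees at least one of $f_1(\la_n), f_2(\la_n)$ is nonzero, so one always has a nonvanishing denominator to divide by, and the Cauchy-product identity is an exact finite sum with no convergence issues. Apart from this, the proof is a routine manipulation of truncated Taylor expansions, so I do not expect any serious obstacle beyond keeping the index ranges and the sign convention straight.
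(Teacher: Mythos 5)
Your proof is correct, and it reaches the conclusion by a genuinely more direct route than the paper. The paper proves \eqref{coef} by induction on $\nu$: it starts from $\Delta^{<\nu>}(\la_n)=0$, expands the products $(\eta_1 f_2)^{<\nu>}$ and $(\eta_2 f_1)^{<\nu>}$ by the Leibniz rule, substitutes the induction hypothesis, and at each step extracts the new coefficient $C_{n,\nu}$ from the two-dimensional linear-algebra fact that $Xf_2(\la_n)+Yf_1(\la_n)=0$ with $(f_1(\la_n),f_2(\la_n))\ne(0,0)$ forces $(X,Y)=C_{n,\nu}\,(f_1(\la_n),-f_2(\la_n))$; this keeps $f_1$ and $f_2$ symmetric and avoids any case split. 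You instead divide once by whichever of $f_1,f_2$ is a unit at $\la_n$ and read off $C_{n,k}$ as the Taylor coefficients of $\eta_1/f_1$ (or $-\eta_2/f_2$), obtaining both identities in \eqref{coef} from a single Cauchy-product expansion of $\eta_1=f_1 g$ and of $\eta_2=-f_2 g+O((\la-\la_n)^{m_n})$. What your version buys is an explicit closed form for the $C_{n,k}$ and no induction; what it costs is the \textsc{wlog} case split and the (routine, but real) sign check in the second case, which you correctly flag. Your treatment of $n=0$ — observing that only $\la\Delta(\la)$, not $\Delta(\la)$, is known to vanish to order $m_0$ at the origin, so that $\Delta^{<\nu>}(0)=(\la\Delta)^{<\nu+1>}(0)=0$ only for $\nu=\overline{0,m_0-2}$ — is exactly the reason the paper restricts the range there, and is handled correctly. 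The Wronskian remark in your opening paragraph is not actually used and could be dropped; it would only be relevant if one wanted the additional fact (asserted in the paper's proof but not in the lemma's statement) that $C_{n,0}\ne 0$, which follows because $\eta_1(\la_n)$ and $\eta_2(\la_n)$ cannot vanish simultaneously.
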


\begin{proof}
Fix $n \in I \backslash \{ 0 \}$.
The relation~\eqref{defD} can be rewritten in the form
\begin{equation} \label{Dfeta}
\eta_1(\la) f_2(\la) + \eta_2(\la) f_1(\la) = \Delta(\la).
\end{equation}
The condition \textsc{(Separation)} and the relation $\Delta(\la_n) = 0$ imply that
$$
[\eta_1(\la_n), \eta_2(\la_n)] = C_{n, 0} [f_1(\la_n), -f_2(\la_n)],
$$
where $C_{n, 0}$ is a nonzero constant, i.e. the relation~\eqref{coef} holds for $\nu = 0$.

Let us prove~\eqref{Dfeta} for $\nu = \overline{1, m_n-1}$ by induction. Assume that~\eqref{coef} is already proved for $\eta_j^{<k>}(\la_n)$, $k = \overline{0, \nu - 1}$, $j = 1, 2$.
Using \eqref{Dfeta} and the relation $\Delta^{<\nu>}(\la_n) = 0$, we get
$$
(\eta_1 f_2)^{<\nu>}(\la_n) = -(\eta_2 f_1)^{<\nu>}(\la_n).
$$
Differentiation of the products yields
$$
\sum_{k = 0}^{\nu} \eta_1^{<k>} f_2^{<\nu - k>} = -\sum_{k = 0}^{\nu} \eta_2^{<k>} f_1^{<\nu - k>}.
$$
Here and below the arguments $(\la_n)$ are omitted for brevity. Using~\eqref{coef} for $\eta_j^{<k>}$, $k = \overline{0, \nu-1}$, we obtain
$$
\eta_1^{<\nu>} f_2 + \sum_{k = 0}^{\nu-1} \sum_{j = 0}^k C_{n, j} f_1^{<k - j>} f_2^{<\nu - k>} = -\eta_2^{<\nu>} f_1 + \sum_{k = 0}^{\nu-1} \sum_{j = 0}^k C_{n, j} f_2^{<k - j>} f_1^{<\nu - k>}.
$$
Calculations show that
\begin{align*}
    \eta_1^{<\nu>} f_2 + \eta_2^{<\nu>} f_1 & = \sum_{j = 0}^{\nu-1} C_{n, j} \sum_{k = j}^{\nu - 1} (f_2^{<k - j>} f_1^{<\nu - k>} - f_1^{<k - j>} f_2^{<\nu - k>})\\  & = \sum_{j = 0}^{\nu - 1} C_{n, j} \left( \sum_{s = 0}^{\nu - j - 1} f_2^{<s>} f_1^{<\nu - j - s>} - \sum_{s = 1}^{\nu - j} f_2^{<s>} f_1^{<\nu - j - s>}\right) \\ & = \sum_{j = 0}^{\nu - 1} C_{n, j} (f_2 f_1^{<\nu - j>} - f_2^{<\nu - j>} f_1).
\end{align*}
Hence
$$
f_2 \left( \eta_1^{<\nu>} - \sum_{j = 0}^{\nu - 1} C_{n, j} f_1^{<\nu - j>}\right) = -f_1 \left( \eta_2^{<\nu>} + \sum_{j = 0}^{\nu - 1} C_{n,j} f_2^{<\nu - j>}\right).
$$
In view of \textsc{(Separation)}, $f_1 \ne 0$ or $f_2 \ne 0$. Consequently, there exists the constant $C_{n, \nu}$ such that
$$
\eta_i^{<\nu>} + (-1)^i \sum_{j = 0}^{\nu - 1} C_{n, j} f_i^{<\nu - j>} = (-1)^i C_{n, \nu} f_i, \quad i = 1, 2.
$$
Thus, the relation~\eqref{coef} is proved for $n \in I \backslash \{ 0 \}$, $\nu = \overline{0,m_n-1}$. Obviously, the arguments above are also valid for $n = 0$, $\nu = \overline{0, m_0-2}$.
\end{proof}

Introduce the vector-functions
\begin{gather} \label{defg}
g(t, \la) := [\eta_1(\la) s(t, \la), -\eta_2(\la) c(t, \la)], \quad
 g_0(t) = [0, 1], \\ \nonumber
 g_{n+\nu}(t) := g^{<\nu>}(t, \la_n), \quad n \in I, \quad \nu = \overline{0, m_n-1}, \quad n + \nu \ge 1.
\end{gather}

\begin{lem} \label{lem:complete}
The sequence $\{ v_n \}_{n = 0}^{\iy}$ is complete in $\mathcal H$ if and only if so does $\{ u_n \}_{n = 0}^{\iy}$. 
\end{lem}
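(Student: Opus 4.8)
The plan is to prove that the families $\{ v_n \}_{n=0}^{\iy}$ and $\{ u_n \}_{n=0}^{\iy}$, where $u_n$ are the vector-functions introduced in~\eqref{defg}, have the same closed linear span in $\mathcal H$. Since a family is complete in $\mathcal H$ exactly when its closed linear span equals $\mathcal H$, this yields the claimed equivalence immediately. Because the equal eigenvalues are arranged consecutively, the index set $\{ n \ge 0 \}$ splits into the blocks $B_n := \{ n, n+1, \dots, n + m_n - 1 \}$, $n \in I$, and both families respect this splitting: for $k \in B_n$ the elements $v_k$ and $u_k$ are $\la$-derivatives at the single point $\la_n$, the only exceptions being the hand-defined $v_0 = u_0 = [0,1]$. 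It therefore suffices to show, for each $n \in I$, that $\{ v_k \}_{k \in B_n}$ and $\{ u_k \}_{k \in B_n}$ span the same subspace of $\mathcal H$; summing over $n \in I$ then gives equality of the algebraic spans, and passing to closures finishes the proof.

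For a fixed $n \in I \setminus \{ 0 \}$ the key step is a direct computation. Expanding $u_{n+\nu} = u^{<\nu>}(t, \la_n)$ by the Leibniz rule applied to the products $\eta_1 s$ and $\eta_2 c$, substituting the identities $\eta_j^{<k>}(\la_n) = (-1)^{j-1} \sum_{l=0}^{k} C_{n,l} f_j^{<k-l>}(\la_n)$ from Lemma~\ref{lem:coef}, and reindexing the double sum via $i = k - l$ (so that each inner sum recollapses into the Leibniz expansion of $f_1 s$, respectively $f_2 c$), I expect to arrive at the triangular relation
$$ u_{n+\nu} = \sum_{l=0}^{\nu} C_{n,l}\, v_{n+\nu-l}, \qquad \nu = \overline{0, m_n - 1}. $$
This represents the passage from $(v_n, \dots, v_{n+m_n-1})$ to $(u_n, \dots, u_{n+m_n-1})$ by a lower-triangular (convolution) matrix with constant diagonal $C_{n,0}$, which is nonzero by Lemma~\ref{lem:coef}. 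Hence the matrix is invertible and the two blocks span the same subspace.

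The main obstacle is the block $n = 0$, where two complications must be reconciled. First, $v_0$ and $u_0$ are defined by hand as $[0,1]$ rather than as $v(t,0)$ and $u(t,0)$; however the would-be zeroth terms $v(t,0) = [0, f_2(0)]$ and $u(t,0) = [0, -\eta_2(0)]$ are themselves multiples of $[0,1]$, since $s^{<0>}(t,0) = 0$, so they contribute nothing beyond the $v_0$-direction. Second, Lemma~\ref{lem:coef} supplies the identities for $\eta_j^{<\nu>}(0)$ only up to $\nu = m_0 - 2$, one order short of the top of the block. This gap is closed using $s^{<0>}(t,0) = 0$ and $c^{<0>}(t,0) = 1$: in the Leibniz expansion of $u_{m_0-1}$ the uncovered coefficient $\eta_1^{<m_0-1>}(0)$ is multiplied by $s^{<0>}(t,0) = 0$ and disappears, while $\eta_2^{<m_0-1>}(0)$ is multiplied by $c^{<0>}(t,0) = 1$ and so contributes only a constant, i.e. a multiple of $[0,1] = v_0$. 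Running the same reindexing as in the previous paragraph, one finds that each $u_\nu$, $\nu = \overline{0, m_0 - 1}$, lies in $\mathrm{span}\{ v_0, \dots, v_\nu \}$ with the top element $v_\nu$ again carrying the coefficient $C_{0,0}$, so that all the uncovered and hand-replaced terms modify only the coefficient of $v_0$, that is, only the first column of the transition matrix. Thus the $n = 0$ block transformation is once more lower-triangular with nonvanishing diagonal, hence invertible, and the two blocks span the same subspace.

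Collecting the block statements over all $n \in I$, the algebraic spans of $\{ v_n \}_{n=0}^{\iy}$ and $\{ u_n \}_{n=0}^{\iy}$ coincide; taking closures shows their closed linear spans coincide, and therefore one family is complete in $\mathcal H$ if and only if the other is. I expect the computation in the previous paragraph, reconciling the hand-defined vectors $v_0$, $u_0$ with the one-order gap in Lemma~\ref{lem:coef}, to be the only genuinely delicate point; the remaining blocks reduce to the routine triangular-invertibility argument.
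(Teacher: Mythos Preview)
Your plan is correct and follows essentially the same route as the paper: both arguments use Lemma~\ref{lem:coef} to express the $\eta_j^{<k>}(\la_n)$ in terms of the $f_j^{<\cdot>}(\la_n)$, reindex the resulting double sum so that it recollapses into Leibniz expansions, and treat the block at $\la_0=0$ separately using $s(t,0)=0$, $c(t,0)=1$. The only cosmetic difference is that the paper carries out this computation inside the inner products $(h,v_n)_{\mathcal H}$ and $(h,g_n)_{\mathcal H}$ (showing $V^{<\nu>}(\la_n)=0 \Rightarrow G^{<\nu>}(\la_n)=0$ and then invoking symmetry for the converse), whereas you work directly with the vectors and obtain the explicit triangular relation $g_{n+\nu}=\sum_{l=0}^{\nu} C_{n,l}\,v_{n+\nu-l}$, whose invertibility via $C_{n,0}\ne 0$ gives both directions at once.
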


\begin{proof}
Let an element $h = [\overline{h_1}, \overline{h_2}] \in \mathcal H$ be such that 
\begin{equation} \label{scalv}
(h, v_n)_{\mathcal H} = 0, \quad n \ge 0.    
\end{equation} 
The definitions~\eqref{defv}, \eqref{defvn1} and~\eqref{defvn2} imply that
\begin{equation} \label{derV}
    V^{<\nu>}(\la_n) = 0, \quad n \in I, \quad \nu = \overline{0, m_n-1},
\end{equation}
where
$$
V(\la) := \int_0^{\pi} (h_1(t) f_1(\la) s(t, \la) + h_2(t) f_2(\la) c(t, \la)) \, dt.
$$
Obviously,
\begin{equation} \label{binV}
    V^{<\nu>}(\la_n) = \sum_{k = 0}^{\nu} \int_0^{\pi} (h_1(t) f_1^{<k>}(\la_n) s^{<\nu - k>}(t, \la_n) + h_2(t) f_2^{<k>}(\la_n) c^{<\nu - k>}(t, \la_n)) \, dt.
\end{equation}
Consider the function
\begin{equation} \label{defG}
G(\la) := \int_0^{\pi} (h_1(t) \eta_1(\la) s(t, \la) - h_2(t) \eta_2(\la) c(t, \la)) \, dt.
\end{equation}
Let us show that
\begin{equation} \label{derG}
    G^{<\nu>}(\la_n) = 0, \quad n \in I, \quad \nu = \overline{0, m_n-1}.
\end{equation}
Using Lemma~\ref{lem:coef}, \eqref{derV} and~\eqref{binV}, we derive
\begin{align*}
    G^{<\nu>}(\la_n) & = \sum_{k = 0}^{\nu} \int_0^{\pi} (h_1(t) \eta_1^{<k>}(\la_n) s^{<\nu - k>}(t, \la_n) - h_2(t) \eta_2^{<k>}(\la_n) c^{<\nu - k>}(t, \la_n)) \, dt \\
    & = \sum_{k = 0}^{\nu} \sum_{j = 0}^k C_{n,j} \int_0^{\pi} (h_1(t) f_1^{<k - j>}(\la_n) s^{<\nu - k>}(t, \la_n) + h_2(t) f_2^{<k - j>}(\la_n) c^{<\nu - k>}(t, \la_n)) \, dt \\
    & = \sum_{l = 0}^{\nu} C_{n, \nu - l} \sum_{j = 0}^l \int_0^{\pi} (h_1(t) f_1^{<j>}(\la_n) s^{<l - j>}(t, \la_n) + h_2(t) f_2^{<j>}(\la_n) c^{<l - j>}(t, \la_n)) \, dt \\
    & = \sum_{l = 0}^{\nu} C_{n, \nu - l} V^{<l>}(\la_n) = 0, 
\end{align*}
for all $n \in I$, $\nu = \overline{0, m_n-1}$, except for $(n, \nu) = (0, m_0-1)$. Let us consider the case $(n, \nu) = (0, m_0-1)$ separately. Note that
\begin{equation} \label{zero}
\int_0^{\pi} (h_1(t) f_1^{<\nu>}(0) s(t, 0) + h_2(t) f_2^{<\nu>}(0) c(t, 0)) \, dt = f_2^{<\nu>}(0) \int_0^{\pi} h_2(t) \, dt = 0, \quad 
\nu = \overline{0, m_0-1},
\end{equation}
since $s(t, 0) = 0$, $c(t, 0) = 1$, $v_0 = [0, 1]$ and \eqref{scalv} holds for $n = 0$. Combining~\eqref{derV}, \eqref{binV} and~\eqref{zero}, we obtain
$$
V^{<\nu>}(0) = \sum_{k = 0}^{\nu - 1} (h_1(t) f_1^{<k>}(0) s^{<\nu - k>}(t, 0) + h_2(t) f_2^{<k>}(0) c^{<\nu - k>}(t, 0)) \, dt = 0, \quad \nu = \overline{0, m_0-1}.
$$
By using analogous ideas and Lemma~\ref{lem:coef}, we derive
\begin{align*}
    G^{<\nu>}(0) & = \sum_{k = 0}^{\nu - 1} (h_1(t) \eta_1^{<k>}(0) s^{<\nu - k>}(t, 0) - h_2(t) \eta_2^{<k>}(0) c^{<\nu - k>}(t, 0)) \, dt \\ & = \sum_{k = 0}^{\nu - 1} \sum_{j = 0}^k C_{n, j} \int_0^{\pi} (h_1(t) f_1^{<k - j>}(0) s^{<\nu - k>}(t, 0) + h_2(t) f_2^{<k - j>}(0) c^{<\nu - k>}(t, 0)) \, dt \\
    & = \sum_{j = 0}^{\nu - 1} C_{n, j} \sum_{l = 0}^{\nu - j - 1} \int_0^{\pi} (h_1(t) f_1^{<l>}(0) s^{<\nu - l - j>}(t, 0) + h_2(t) f_2^{<l>}(0) c^{<\nu - l - j>}(t, 0)) \, dt \\ & = \sum_{j = 0}^{\nu - 1} C_{n, j} V^{<\nu - j>}(0) = 0, \quad \nu = \overline{0, m_0-1}.
\end{align*}
In particular, \eqref{derG} holds for $n = 0$, $\nu = m_0-1$.

The relations~\eqref{defG} and~\eqref{derG} yield
\begin{equation} \label{scalg}
(h, g_n)_{\mathcal H} = 0, \quad n \ge 0.
\end{equation}
Thus, we have shown that \eqref{scalg} follows from \eqref{scalv}. It can be proved similarly that~\eqref{scalv} follows from~\eqref{scalg}. Therefore the completeness of the sequence $\{ v_n \}_{n = 0}^{\iy}$ is equivalent to the completeness of the sequence $\{ g_n \}_{n = 0}^{\iy}$.
\end{proof}

Further we need two auxiliary propositions. Proposition~\ref{prop:cos} is proved in Appendix of \cite{BK19}.

\begin{prop} \label{prop:cos}
Let $\{ \theta_n \}_{n = 0}^{\iy}$ be a sequence of complex numbers, satisfying the asymptotic formula
\begin{equation} \label{asympttheta}
    \sqrt{\theta_n} = \frac{ \pi n}{a} + \varkappa_n, \quad \{ \varkappa_n \} \in l_2, \quad a > 0.
\end{equation}
Define 
\begin{equation} \label{defmu}
\mu_n := \#\{ k \ge 0 \colon \theta_k = \theta_n \}, \quad J := \{ n \ge 0 \colon \theta_n \ne \theta_k, \, \forall k \colon 0 \le k < n \}.
\end{equation}
Then the sequence $\{ c^{<\nu>}(t, \theta_n) \}_{n \in J, \, \nu = \overline{0, \mu_n-1}}$ is a Riesz basis in $L_2(0, a)$.
\end{prop}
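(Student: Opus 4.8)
The plan is to treat the system $\Sigma := \{ c^{<\nu>}(t, \theta_n) \}_{n \in J,\, \nu = \overline{0, \mu_n - 1}}$ as a quadratically close perturbation of the orthogonal basis $\{ \cos(\pi n t / a) \}_{n = 0}^{\iy}$ of $L_2(0, a)$ and to invoke a Bari-type stability theorem: a system that is quadratically close to a Riesz basis and complete is itself a Riesz basis. First note that \eqref{asympttheta} forces $|\sqrt{\theta_{n+1}} - \sqrt{\theta_n}| \to \pi/a > 0$, so $\mu_n = 1$ and $\theta_n$ is simple for all large $n$; hence $\Sigma$ contains only finitely many generalized elements $c^{<\nu>}$ with $\nu \ge 1$, and we may enumerate $\Sigma = \{ f_n \}_{n \ge 0}$ so that $f_n = \cos(\rho_n t)$ with $\rho_n := \sqrt{\theta_n}$ for $n \ge N_0$. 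With $g_n(t) := \cos(\pi n t / a)$ and the elementary bound $|\cos(\rho_n t) - \cos(\pi n t / a)| \le t\,|\varkappa_n|$ one gets $\| f_n - g_n \|_{L_2(0,a)} \le C |\varkappa_n|$ for $n \ge N_0$, whence $\sum_n \| f_n - g_n \|_{L_2(0,a)}^2 < \iy$ (the finitely many remaining $f_n$ being matched bijectively with the $g_n$, $n < N_0$, and contributing only a finite sum). Since multiplying each vector of a Riesz basis by a bounded sequence of nonzero scalars preserves both the Riesz-basis property and quadratic closeness, after normalization $\{ g_n \}$ is an orthonormal basis and $\{ f_n \}$ is quadratically close to it.

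It remains to prove that $\Sigma$ is complete in $L_2(0, a)$. Let $h \in L_2(0, a)$ be orthogonal to every element of $\Sigma$, and set $G(\theta) := \int_0^a h(t) \cos(\sqrt{\theta}\, t)\, dt$, which is entire in $\theta$, and $\widetilde G(\rho) := G(\rho^2) = \int_0^a h(t) \cos(\rho t)\, dt$, which is even, entire of exponential type $\le a$, and belongs to $L_2(\mathbb R)$ by the Paley--Wiener theorem applied to the even extension of $h$ to $(-a, a)$. The orthogonality relations say that $G$ vanishes to order $\mu_n$ at $\theta_n$, so $\widetilde G$ vanishes to order $\mu_n$ at each of $\pm \rho_n$ (and to order $2\mu_0$ at the origin if $\theta_0 = 0$). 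Comparing the canonical product $\Phi$ over this zero set with $\prod_{n \ge 1} \bigl(1 - \rho^2 / (\pi n/a)^2\bigr) = \dfrac{\sin(a\rho)}{a\rho}$, one checks that the $l_2$-perturbation $\rho_n = \pi n/a + \varkappa_n$ alters this function only by a factor bounded above and below on $\mathbb R$ outside fixed neighbourhoods of the zeros, while the extra pair of zeros near the origin inserts an additional quadratic factor; hence $\Phi$ is of exponential type exactly $a$ and grows like $|\rho|$ on the real line. Since $\widetilde G$ is divisible by $\Phi$ with entire quotient of exponential type $0$, and $\widetilde G \in L_2(\mathbb R)$ while $\Phi$ grows, the quotient must vanish identically; thus $\widetilde G \equiv 0$, $h = 0$, and $\Sigma$ is complete.

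To conclude, let $T$ be the bounded operator on $L_2(0, a)$ carrying the orthonormal basis $\{ g_n \}$ onto $\{ f_n \}$. Quadratic closeness gives $T = I + V$ with $V$ Hilbert--Schmidt; completeness of $\{ f_n \}$ makes $\Ran T$ dense and hence, since $T$ is $I$ plus a compact operator, equal to $L_2(0,a)$, so $T$ is surjective, therefore (being Fredholm of index zero) injective, therefore boundedly invertible. Consequently $\{ f_n \} = \Sigma$ is a Riesz basis in $L_2(0, a)$. The step I expect to be the main obstacle is the estimate for $\Phi$: one must show, purely from $\{ \varkappa_n \} \in l_2$, that the ratio of the canonical product with zeros $\{ \rho_n \}$ to the one with zeros $\{ \pi n/a \}$ is bounded and bounded away from zero on the real line off the zeros, and then extract from this that $\widetilde G / \Phi$ is genuinely of exponential type $0$; the accompanying bookkeeping for the finitely many multiple values $\theta_n$ and for the position of $\theta_0$ relative to the origin is the other delicate point.
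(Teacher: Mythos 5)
The paper does not actually prove Proposition~\ref{prop:cos}: it is imported verbatim from the Appendix of \cite{BK19}, and the proof there has exactly the architecture you chose, so your route is the standard one rather than a genuinely different one. All three of your blocks are sound. The closeness estimate $\| \cos(\rho_n t) - \cos(\pi n t/a) \|_{L_2(0,a)} \le C |\varkappa_n|$ is correct, with the caveat that since the $\varkappa_n$ are complex your pointwise bound $|\cos(\rho_n t)-\cos(\pi n t/a)|\le t|\varkappa_n|$ must carry an extra factor $e^{a\sup_n|\mathrm{Im}\,\varkappa_n|}$ (harmless, as $\{\varkappa_n\}\in l_2$ is bounded); the counting of elements against the orthogonal basis $\{\cos(\pi n t/a)\}_{n\ge 0}$ is right because $\sum_{n\in J}\mu_n$ enumerates all indices $n\ge 0$; and the concluding Fredholm argument (range of $I+V$ closed, dense by completeness, hence all of $L_2$, index zero, hence invertible) is a correct and complete proof that a complete system quadratically close to an orthonormal basis is a Riesz basis. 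The only part you assert rather than execute is the one you flag yourself: the two-sided estimate for the canonical product $\Phi$, namely that $\prod_{n\ge 1}\bigl(1-\rho^2/\rho_n^2\bigr)$ is comparable to $\sin(a\rho)/(a\rho)$ off fixed neighbourhoods of the zeros, so that $|\Phi(\rho)|\ge c_\delta\,|\rho|\,e^{a|\mathrm{Im}\,\rho|}$ there, after which $\widetilde G/\Phi$ is of exponential type zero, bounded on $\mathbb R$ (use the maximum principle on small disks to cross the zero neighbourhoods, since the $\rho_n$ need not be real), and tends to zero along the real axis by Riemann--Lebesgue, hence vanishes. This estimate is standard for zero sequences of the form $\pi n/a + \varkappa_n$ with $\{\varkappa_n\}\in l_2$ and can be quoted from \cite{FY01} or \cite{BFY14}, so I would classify it as a citable omission rather than a gap; still, it is the technical heart of the completeness step and in a self-contained write-up it would have to be supplied.
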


\begin{prop} \label{prop:evenPW}
Let $G(\la)$ be an entire function, satisfying the conditions: 
$$
|G(\rho^2)| \le C \exp(|\mbox{Im} \rho| a), \: \forall \la \in \mathbb C, \qquad \int_{\mathbb R} |G(\rho^2)|^2 d \rho < \iy.
$$
for some positive constants $C$ and $a$. Let $\{ \la_n \}_{n = 0}^{\iy}$ be arbitrary complex numbers, and let the set $I$ and the multiplicities $\{ m_n \}_{n = 0}^{\iy}$ be defined by~\eqref{defI}. Suppose that 
$$
G^{<\nu>}(\la_n) = 0, \quad n \in I, \quad \nu = \overline{0, m_n-1},
$$
and the sequence $\{ c^{<\nu>}(t, \la_n) \}_{n \in I, \, \nu = \overline{0, m_n-1}}$ is complete in $L_2(0, a)$. Then $G(\la) \equiv 0$.
\end{prop}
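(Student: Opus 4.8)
The plan is to show that the hypotheses force $G$ to be an \emph{even} Paley--Wiener function, to represent it as a cosine transform with an $L_2$ density, and then to invoke the completeness assumption to kill that density.

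\textbf{Step 1 (Paley--Wiener representation).} First I would pass to the variable $\rho$ by setting $F(\rho) := G(\rho^2)$. Since $G$ is entire and $\rho \mapsto \rho^2$ is entire, $F$ is entire; it is even because $F(-\rho) = G(\rho^2) = F(\rho)$; the growth bound $|G(\rho^2)| \le C \exp(|\mbox{Im}\, \rho|\, a)$ says $F$ has exponential type at most $a$; and $\int_{\mathbb R} |F(\rho)|^2 \, d\rho = \int_{\mathbb R} |G(\rho^2)|^2\, d\rho < \iy$. Hence $F$ lies in the Paley--Wiener space, and $F(\rho) = \int_{-a}^{a} \phi(t) e^{i\rho t}\, dt$ for some $\phi \in L_2(-a, a)$. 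Symmetrising via the evenness of $F$, $F(\rho) = \tfrac12 (F(\rho) + F(-\rho)) = \int_{-a}^{a} \phi(t) \cos(\rho t)\, dt = \int_0^{a} g(t) \cos(\rho t)\, dt$, where $g(t) := \phi(t) + \phi(-t) \in L_2(0, a)$. Since every $\la \in \mathbb C$ equals $\rho^2$ for some $\rho$, and $c(t, \la) = \cos(\sqrt{\la}\, t)$ is an even function of $\sqrt{\la}$, this yields $G(\la) = \int_0^{a} g(t)\, c(t, \la)\, dt$ for all $\la \in \mathbb C$.

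\textbf{Step 2 (differentiation under the integral).} Next I would differentiate this identity $\nu$ times in $\la$. Because for each fixed $t \in [0, a]$ the function $c(t, \la)$ is entire in $\la$ with $\la$-derivatives bounded locally uniformly in $(t, \la)$ (equivalently, one may write $G^{<\nu>}(\la_0) = \frac{1}{2\pi i}\oint \frac{G(\la)}{(\la - \la_0)^{\nu + 1}}\, d\la$ and apply Fubini), differentiation under the integral sign is legitimate, so $G^{<\nu>}(\la) = \int_0^{a} g(t)\, c^{<\nu>}(t, \la)\, dt$. Substituting the prescribed zeros gives $\int_0^{a} g(t)\, c^{<\nu>}(t, \la_n)\, dt = 0$ for all $n \in I$, $\nu = \overline{0, m_n - 1}$.

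\textbf{Step 3 (completeness).} Finally, the functional $f \mapsto \int_0^{a} g(t) f(t)\, dt$ is continuous on $L_2(0, a)$ and, by Step 2, vanishes on every member of the family $\{ c^{<\nu>}(t, \la_n) \}_{n \in I, \, \nu = \overline{0, m_n - 1}}$; since that family is complete in $L_2(0, a)$, the functional vanishes identically. Taking $f = \overline{g}$ gives $\int_0^{a} |g(t)|^2\, dt = 0$, so $g = 0$ in $L_2(0, a)$, and therefore $G \equiv 0$.

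\textbf{Main obstacle.} I expect the only non-routine point to be Step 1: one must check that the Paley--Wiener theorem delivers a square-integrable spectral density $\phi$ (so that the symmetrised density $g$ again lies in $L_2(0, a)$), and that the evenness of $F$ legitimately converts the exponential kernel into a cosine kernel over $(0, a)$. This even Paley--Wiener representation is classical in inverse spectral theory (cf. the transformation-operator arguments in \cite{Mar77, FY01}); Steps 2 and 3 are then routine.
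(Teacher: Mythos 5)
Your proof is correct and follows essentially the same route as the paper's, which likewise represents $G(\rho^2)$ as a cosine transform $\int_0^a r(t)\cos(\rho t)\,dt$ via the Paley--Wiener theorem, differentiates under the integral, and invokes completeness to conclude $r=0$. The only difference is that you spell out the symmetrisation step (passing from the exponential kernel on $(-a,a)$ to the cosine kernel on $(0,a)$ using the evenness of $F(\rho)=G(\rho^2)$), which the paper takes for granted.
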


\begin{proof}
By Paley-Wiener Theorem, the function $G$ can be represented in the form
\begin{equation} \label{intG}
G(\rho^2) = \int_0^{a} r(t) \cos (\rho t) \, dt, \quad r \in L_2(0, 2\pi).
\end{equation}
Differentiating~\eqref{intG}, we get
$$
G^{<\nu>}(\la_n) = \int_0^{a} r(t) c^{<\nu>}(t, \la_n) \, dt = 0, \quad
n \in I, \quad \nu = \overline{0, m_n-1}.
$$
Since the sequence $\{ c^{<\nu>}(t, \la_n) \}_{n \in I, \, \nu = \overline{0, m_n-1}}$ is complete in $L_2(0, a)$, we have $r = 0$ in $L_2(0, a)$, so $G(\la) \equiv 0$.
\end{proof}

\begin{proof}[Proof of Theorem~\ref{thm:cond}(i)]
Suppose that the conditions~\textsc{(Separation)} and~\textsc{(Complete2)} are fulfilled. Let us show that these two conditions imply the completeness of the sequence $\{ g_n \}_{n = 0}^{\iy}$. Let $h \in \mathcal H$ be such that~\eqref{scalg} is valid. We have to show that $h = 0$. Consider the function~$G(\la)$ defined by~\eqref{defG}. The relation~\eqref{scalg} implies \eqref{derG}. In view of~\eqref{intS}, \eqref{intSp},  \eqref{defG}, \eqref{derG} and \textsc{(Complete2)}, the conditions of Proposition~\ref{prop:evenPW} are fulfilled for $a = 2\pi$. Therefore $G(\la) \equiv 0$, i.e.
\begin{equation} \label{G0}
\int_0^{\pi} (h_1(t) \eta_1(\la) s(t, \la) - h_2(t) \eta_2(\la) c(t, \la)) \, dt \equiv 0.
\end{equation}

By virtue of Proposition~\ref{prop:dir}, $\eta_1(\la)$ has a countable set of zeros $\{ \theta_n \}_{n = 1}^{\iy}$, counted with their multiplicities and satisfying the asymptotic formula
\eqref{asympttheta} with $a = \pi$.
Add the value $\theta_0 = 0$. Define the set $J$ and the multiplicities $\{ \mu_n \}_{n \in J}$ by~\eqref{defmu}.
It follows from~\eqref{G0} that
\begin{equation} \label{rel3}
\left( \int_0^{\pi} h_2(t) \eta_2(\la) c(t, \la) \, dt \right)^{<\nu>}_{|\la = \theta_n} = 0, \quad n \in J, \quad \nu = \overline{0, \mu_n-1}.
\end{equation}
Note that $\eta_2(\theta_n) \ne 0$, $n \ge 1$. (Otherwise we have $S(\pi, \theta_n) = S'(\pi, \theta_n) = 0$. Together with equation~\eqref{eqv}, this yields the relation $S(x, \la) \equiv 0$, which is wrong). Consequently, using~\eqref{rel3} and the equality $\int_0^{\pi} h_2(t) \, dt = 0$, we obtain
$$
\int_0^{\pi} h_2(t) c^{<\nu>}(t, \theta_n) = 0, \quad n \in J, \quad \nu = \overline{0, \mu_n-1}.
$$
According to Proposition~\ref{prop:cos}, the sequence $\{ c^{<\nu>}(t, \theta_n) \}_{n \in J, \, \nu = \overline{0, \mu_n - 1}}$ is complete in $L_2(0, \pi)$. Hence $h_2 = 0$ in $L_2(0, \pi)$. Returning to~\eqref{G0}, we easily conclude that also $h_1 = 0$ in $L_2(0, \pi)$.

Thus, we have shown that~\eqref{scalg} implies $h = 0$ in $\mathcal H$, so the sequence $\{ g_n \}_{n = 0}^{\iy}$ is complete in $\mathcal H$. By Lemma~\ref{lem:complete}, the sequence $\{ v_n \}_{n = 0}^{\iy}$ is also complete in $\mathcal H$ under the assumptions of the theorem.
\end{proof}

\begin{lem} \label{lem:gn0}
Let $\{ \tau_n \}_{n \ge 0}$ be arbitrary complex numbers such that $\tau_n \ne \tau_k$ and $\tau_n \ne \overline{\tau_k}$ for all $n \ne k$, $n, k \ge 0$. 
Suppose that the sequence $\{ \cos (\tau_n t) \}_{n = 0}^{\iy}$ is a Riesz basis in $L_2(0, 2\pi)$.
Then the sequence $\{ g_n^0 \}_{n = 0}^{\iy}$ is a Riesz basis in $\mathcal H$, where
\begin{equation} \label{defgn0}
g_n^0(t) := [\sin (\tau_n \pi) \sin (\tau_n t), - \cos(\tau_n \pi) \cos(\tau_n t)].
\end{equation}
\end{lem}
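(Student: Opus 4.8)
The plan is to realize $\{g_n^0\}_{n=0}^{\iy}$ as the image of the Riesz basis $\{\cos(\tau_n t)\}_{n=0}^{\iy}$ of $L_2(0,2\pi)$ under an explicit isomorphism onto $\mathcal H$. Since the image of a Riesz basis under a bounded invertible linear operator is again a Riesz basis (a Riesz basis is, by definition, the image of an orthonormal basis under an isomorphism, and the composition of isomorphisms is an isomorphism), this reduces the lemma to writing down the right operator.

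First I would introduce the operator $W \colon L_2(0,2\pi) \to \mathcal H$ defined by
\[
(Wr)(t) := \Bigl[\, \tfrac{1}{2}\bigl(r(\pi - t) - r(\pi + t)\bigr),\ -\tfrac{1}{2}\bigl(r(\pi - t) + r(\pi + t)\bigr)\,\Bigr], \qquad t \in (0,\pi).
\]
Here $r \mapsto r(\pi - \cdot)$ is the reflection of $r|_{(0,\pi)}$ and $r \mapsto r(\pi + \cdot)$ is the shift of $r|_{(\pi,2\pi)}$ onto $(0,\pi)$; both are bounded, hence $W$ is bounded. A short computation using the parallelogram identity shows $\|Wr\|_{\mathcal H}^2 = \tfrac12\|r\|_{L_2(0,2\pi)}^2$, i.e. $\sqrt2\,W$ is an isometry. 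To see that $W$ is onto, given $h=[h_1,h_2]\in\mathcal H$ I would solve the two defining equations for the unknowns $r(\pi-t)$ and $r(\pi+t)$, obtaining $r(\pi - t) = h_1(t) - h_2(t)$ and $r(\pi + t) = -h_1(t) - h_2(t)$; this recovers $r\in L_2(0,2\pi)$ uniquely and in a bounded way from $h$. Hence $W$ is an isomorphism of $L_2(0,2\pi)$ onto $\mathcal H$ (in fact a scalar multiple of a unitary operator).

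Next I would check that $W$ sends each generator to the right place. Applying $W$ to $r(t) = \cos(\tau_n t)$ and using the addition formulas $\cos(\tau_n(\pi\mp t)) = \cos(\tau_n\pi)\cos(\tau_n t) \pm \sin(\tau_n\pi)\sin(\tau_n t)$, one gets $\tfrac12\bigl(r(\pi-t)-r(\pi+t)\bigr) = \sin(\tau_n\pi)\sin(\tau_n t)$ and $-\tfrac12\bigl(r(\pi-t)+r(\pi+t)\bigr) = -\cos(\tau_n\pi)\cos(\tau_n t)$, which is precisely $g_n^0(t)$ as given by~\eqref{defgn0}. Therefore $\{g_n^0\}_{n=0}^{\iy} = \{W\cos(\tau_n\cdot)\}_{n=0}^{\iy}$, and since $\{\cos(\tau_n t)\}_{n=0}^{\iy}$ is a Riesz basis of $L_2(0,2\pi)$ by hypothesis while $W$ is an isomorphism, $\{g_n^0\}_{n=0}^{\iy}$ is a Riesz basis of $\mathcal H$.

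I do not expect a serious obstacle: the only real content is discovering the operator $W$, after which its boundedness, surjectivity, and the identity $W\cos(\tau_n\cdot) = g_n^0$ are all elementary. It is worth noting that the hypotheses $\tau_n\neq\tau_k$ and $\tau_n\neq\overline{\tau_k}$ for $n\neq k$ are not actually used in this argument; they are relevant only for the setting in which this lemma is subsequently applied.
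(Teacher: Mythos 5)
Your proof is correct, and it takes a genuinely different route from the paper's. The paper invokes \cite[Theorem~3.6.6]{Chr03} and verifies the two defining properties of a Riesz basis separately: completeness of $\{ g_n^0 \}_{n=0}^{\iy}$ is obtained by applying the Paley--Wiener-type Proposition~\ref{prop:evenPW} to the entire function $G_0(\la)$, and the two-sided inequality \eqref{twoside} is obtained from the Gram-matrix identity $(g_n^0, g_k^0)_{\mathcal H} = c\int_0^{2\pi} \cos(\overline{\tau_n} t)\cos(\tau_k t)\,dt$, which shows that finite linear combinations of the $g_n^0$ and of the $\cos(\tau_n t)$ have proportional norms. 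Your operator $W$ packages both steps into one: the identity $\| W r \|_{\mathcal H}^2 = \tfrac12 \| r \|_{L_2(0,2\pi)}^2$ is precisely the operator form of the paper's Gram computation (and your constant $\tfrac12$ is the correct one --- the paper's displayed equality $(g_n^0, g_k^0)_{\mathcal H} = \int_0^{2\pi}\cos(\overline{\tau_n}t)\cos(\tau_k t)\,dt$ is off by this harmless factor), while your explicit inversion $r(\pi - t) = h_1(t) - h_2(t)$, $r(\pi + t) = -h_1(t) - h_2(t)$ replaces the paper's separate completeness argument and renders Proposition~\ref{prop:evenPW} unnecessary for this lemma. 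All your computations check out: $W\cos(\tau_n \cdot) = g_n^0$ by the addition formulas, $\sqrt2\, W$ is unitary by the parallelogram law, and the standard fact that a bounded invertible operator maps Riesz bases to Riesz bases finishes the argument. What your approach buys is a shorter, more conceptual proof with fewer external ingredients; what the paper's approach buys is that the same Gram/completeness machinery is reused verbatim elsewhere in Section~3. Your closing remark is also accurate: the hypotheses $\tau_n \ne \tau_k$ and $\tau_n \ne \overline{\tau_k}$ play no role in your argument and are relevant mainly to the form of the paper's Gram formula and to the way the lemma is applied in the proof of Theorem~\ref{thm:cond}(ii).
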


\begin{proof}
In view of \cite[Theorem~3.6.6]{Chr03}, for the sequence $\{ g_n^0 \}_{n = 0}^{\iy}$ to be a Riesz basis in $\mathcal H$, it is sufficient to be complete in $\mathcal H$ and to satisfy the two-side inequality
\begin{equation} \label{twoside}
M_1 \sum_{n = 0}^{N_0} |b_n|^2 \le \left\| \sum_{n = 0}^{N_0} b_n g_n^0 \right\|_{\mathcal H}^2 \le M_2 \sum_{n = 0}^{N_0} |b_n|^2
\end{equation}
for every sequence $\{ b_n \}_{n = 0}^{\iy}$, every integer $N_0 \ge 0$ and some fixed positive constants $M_1$ and $M_2$, independent of $\{ b_n \}$ and $N_0$.

First, we show that the sequence $\{ g_n^0 \}_{n = 0}^{\iy}$ is complete in $\mathcal H$. Let $h = [\overline{h_1}, \overline{h_2}] \in \mathcal H$ be such that $(h, g_n^0)_{\mathcal H} = 0$ for all $n \ge 0$. It means that the function
$$
G_0(\la) := \int_0^{\pi} (h_1(t) \sin (\sqrt{\la} \pi) \sin (\sqrt{\la} t) - h_2(t) \cos (\sqrt{\la} \pi) \cos (\sqrt{\la} t)) \, dt
$$
has zeros $\{ \tau_n^2 \}_{n \ge 0}$. Applying Proposition~\ref{prop:evenPW}, we conclude that $G_0(\la) \equiv 0$. Then one can easily show that $h_1 = h_2 = 0$ in $L_2(0, \pi)$, so $\{ g_n^0 \}_{n = 0}^{\iy}$ is complete.

Second, we prove the two-side inequality~\eqref{twoside}. Calculations show that
\begin{align*}
(g_n^0, g_k^0)_{\mathcal H} & = \int_0^{\pi} (\sin (\overline{\tau_n} \pi) \sin (\overline{\tau_n} t) \sin (\tau_k \pi) \sin(\tau_k t) + \cos (\overline{\tau_n} \pi) \cos (\overline{\tau_n} t) \cos (\tau_k \pi) \cos(\tau_k t)) \, dt \\ & = \frac{\sin (2 (\overline{\tau_n} - \tau_k) \pi)}{2 (\overline{\tau_n} - \tau_k)} = \int_0^{2\pi} \cos (\overline{\tau_n} t) \cos(\tau_k t) \, dt.
\end{align*}
Hence
$$
\left\| \sum_{n = 0}^{N_0} b_n g_n^0 \right\|_{\mathcal H} = 
\sum_{n = 0}^{N_0} \sum_{k = 0}^{N_0} \overline{b_n} b_k (g_n^0, g_k^0)_{\mathcal H} = \left\| \sum_{n = 0}^{N_0} b_n \cos (\tau_n t) \right\|_{L_2(0, 2\pi)}.
$$
Since the sequence $\{ \cos (\tau_n t) \}_{n = 0}^{\iy}$ is a Riesz basis in $L_2(0, 2\pi)$, the two-side inequality similar to~\eqref{twoside} is valid for this sequence. Consequently, the inequality~\eqref{twoside} is also valid for $\{ g_n^0 \}_{n = 0}^{\iy}$, so $\{ g_n^0 \}_{n = 0}^{\iy}$ is a Riesz basis in $\mathcal H$.
\end{proof}

\begin{proof}[Proof of Theorem~\ref{thm:cond}(ii)]
Suppose that the conditions \textsc{(Separation)}, \textsc{(Simple)}, \textsc{(Asymptotics)} and~\textsc{(Basis2)} are fulfilled. First, let us show that $\{ g_n \}_{n = 0}^{\iy}$ is a Riesz basis in $\mathcal H$. Since \textsc{(Basis2)} implies \textsc{(Complete2)}, the conditions of Theorem~\ref{thm:cond}(i) hold, so the sequence $\{ g_n \}_{n = 0}^{\iy}$ is complete in $\mathcal H$ according to the previous proof. Substituting~\eqref{intS} and~\eqref{intSp} into~\eqref{defg}, we get
$$
g(t, \rho^2) = [\sin (\rho \pi) \sin (\rho t), -\cos (\rho \pi) \cos (\rho t)] + O\left( \rho^{-1} \exp(2|\mbox{Im}\,\rho|\pi)\right), \quad |\rho| \to \iy.
$$
Substituting $\rho = \rho_n$ into the latter relation and taking the conditions \textsc{(Simple)} and \textsc{(Asymptotics)} into account, we conclude that
$\{ \| g_n - g_n^0 \|_{\mathcal H} \}_{n \ge 0} \in l_2$, where $g_n^0$ is defined by~\eqref{defgn0} for $n \ge 0$.
Here $\tau_n = \rho_n$ for $n \ge n_0$ and $\{ \tau_n \}_{n = 0}^{n_0 - 1}$ are arbitrary complex numbers, such that $\tau_n \ne \tau_k$ and $\tau_n \ne \overline{\tau_k}$ for all $n \ne k$, $n, k \ge 0$. Thus, the sequence $\{ \tau_n \}_{n \ge 0}$ satisfies the conditions of Lemma~\ref{lem:gn0}. The Riesz-basis property of the sequence $\{ \cos(\tau_n t) \}_{n = 0}^{\iy}$ follows from \textsc{(Basis2)}. Applying Lemma~\ref{lem:gn0}, we conclude that $\{ g_n^0 \}_{n = 0}^{\iy}$ is a Riesz basis in $\mathcal H$. Thus, the sequence $\{ g_n \}_{n = 0}^{\iy}$ is complete and $l_2$-close to the Riesz basis $\{ g_n^0 \}_{n = 0}^{\iy}$ in $\mathcal H$. Hence $\{ g_n \}_{n = 0}^{\iy}$ is also a Riesz basis.

Second, let us show that the Riesz-basis property of $\{ g_n \}_{n = 0}^{\iy}$ implies that $\{ v_n \}_{n = 0}^{\iy}$ is an unconditional basis in $\mathcal H$, i.e. the normalized sequence $\{ v_n / \| v_n \|_{\mathcal H} \}_{n = 0}^{\iy}$ is a Riesz basis. By Lemma~\ref{lem:complete}, the sequence $\{ v_n \}_{n = 0}^{\iy}$ is complete in $\mathcal H$. Recall that, by \textsc{(Simple)}, the eigenvalues $\{ \la_n \}$ are simple for sufficiently large $n$. Therefore, by Lemma~\ref{lem:coef}, we have
$g_n = k_n v_n$, $n \ge n_0$, where $\{ k_n \}_{n \ge n_0}$ are nonzero constants. This fact together with the completeness of $\{ v_n \}_{n = 0}^{\iy}$ yield the claim.
\end{proof}

\section{Hochstadt-Lieberman problem}

In this section, we show one of the applications of our main results.
Consider the following eigenvalue problem:
\begin{gather} \label{eqv2}
    -y''(x) + q(x) y(x) = \la y(x), \quad x \in (0, 2\pi), \\ \label{dbc}
    y(0) = y(2\pi) = 0,
\end{gather}
with a complex-valued potential $q \in L_2(0, 2 \pi)$. Denote by $\{ \la_n \}_{n = 1}^{\iy}$ the eigenvalues of the problem~\eqref{eqv2}-\eqref{dbc}, counted with their multiplicities and numbered according to their asymptotics
\begin{equation} \label{asymptla}
\sqrt{\la_n} = \frac{n}{2} + \frac{\Omega}{\pi n} + o\left( n^{-1}\right), \quad n \to \iy, 
\end{equation}
where $\Omega := \frac{1}{2} \int_0^{2\pi} q(x) \, dx$.

The Hochstadt-Lieberman problem, also called the half-inverse problem, is formulated as follows.

\begin{ip} \label{ip:HL}
Suppose that the potential $q(x)$ is known a priori for $x \in (\pi, 2\pi)$. Given the spectrum $\{ \la_n \}_{n = 1}^{\iy}$, find the potential $q(x)$ for $x \in (0, \pi)$.
\end{ip}

Inverse Problem~\ref{ip:HL} and its generalizations were studied in \cite{HL78, GS00, Hor01, HM04, MP10, But11} and other papers. In this section, we show that this problem can be treated as a special case of Inverse Problem~\ref{ip:main}.

Denote by $S(x, \la)$ and $\psi(x, \la)$ the solutions of equation~\eqref{eqv2}, satisfying the initial conditions $S(0, \la) = 0$, $S'(0, \la) = 1$, $\psi(2\pi, \la) = 0$, $\psi'(2\pi, \la) = -1$. The eigenvalues of the problem~\eqref{eqv2}-\eqref{dbc} coincide with the zeros of the characteristic function
\begin{equation} \label{defD2}
\Delta(\la) = \psi(\pi, \la) S'(\pi, \la) - \psi'(\pi, \la) S(\pi, \la).
\end{equation}
Comparing~\eqref{defD2} with~\eqref{defD}, we conclude that the eigenvalue problem~\eqref{eqv2}-\eqref{dbc} is equivalent to~\eqref{eqv}-\eqref{bc} with 
\begin{equation} \label{fpsi}
f_1(\la) := \psi(\pi, \la), \quad f_2(\la) := -\psi'(\pi, \la). 
\end{equation}
Note that these functions $f_j(\la)$, $j = 1, 2$, are entire in $\la$-plane, and they can be constructed by the known part of the potential $q(x)$, $x \in (\pi, 2\pi)$.
The constant $\om$ also can be easily determined by the given data of Inverse Problem~\ref{ip:HL}. Indeed, we have
$$
\om = \frac{1}{2} \int_0^{\pi} q(x) \, dx = \Omega - \frac{1}{2} \int_{\pi}^{2\pi} q(x) \, dx,
$$
and the constant $\Omega$ can be found from the eigenvalue asymptotics~\eqref{asymptla}.
Thus, Inverse Problem~\ref{ip:HL} is reduced to Inverse Problem~\ref{ip:main} by the whole spectrum $\{ \la_n \}_{n = 1}^{\iy}$ of~\eqref{eqv2}-\eqref{dbc}.

\begin{prop}
Let $f_j(\la)$, $j = 1, 2$, be entire functions defined by~\eqref{fpsi}, and let $\{ \la_n \}_{n = 1}^{\iy}$ be the eigenvalues of the problem~\eqref{eqv2}-\eqref{dbc} counted with their multiplicities, $\la_0 := 0$. Then the conditions \textsc{(Basis2)}, \textsc{(Separation)}, \textsc{(Simple)} and~\textsc{(Asymptotics)} are fulfilled.
\end{prop}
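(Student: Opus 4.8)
The plan is to verify the four conditions one by one; each reduces to a standard fact about the Dirichlet problem~\eqref{eqv2}--\eqref{dbc}, namely either the uniqueness theorem for the Cauchy problem of equation~\eqref{eqv2} or the classical eigenvalue asymptotics~\eqref{asymptla}.

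For \textsc{(Separation)} I would argue by contradiction. If $f_1(\la_n) = f_2(\la_n) = 0$ for some $n \ge 0$, then by~\eqref{fpsi} the solution $\psi(x, \la_n)$ of~\eqref{eqv2} satisfies $\psi(\pi, \la_n) = \psi'(\pi, \la_n) = 0$, hence $\psi(\cdot, \la_n) \equiv 0$ on $(0, 2\pi)$ by uniqueness of the Cauchy problem, which contradicts the normalization $\psi'(2\pi, \la_n) = -1$. This also covers the added node $\la_0 = 0$.

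For \textsc{(Asymptotics)} and \textsc{(Simple)} I would start from~\eqref{asymptla}, which gives $\rho_n = \sqrt{\la_n} = \tfrac{n}{2} + \tfrac{\Omega}{\pi n} + o(n^{-1})$ as $n \to \iy$. This immediately yields $\mathrm{Im}\,\rho_n = O(n^{-1}) = O(1)$ and $\rho_n^{-1} = \tfrac{2}{n} + o(n^{-1})$, hence $\{ \rho_n^{-1} \}_{n \ge n_0} \in l_2$, i.e.\ \textsc{(Asymptotics)}. The same expansion shows $\la_n \to \iy$, so $\la_n \ne 0$ for large $n$; moreover for distinct indices $n, k$ both large one has $\rho_n - \rho_k = \tfrac{n-k}{2} + o(1) \ne 0$, so $\la_n \ne \la_k$, while only finitely many $\la_k$ are bounded, so each $\la_n$ with $n$ large occurs exactly once in $\{ \la_k \}_{k = 0}^{\iy}$, i.e.\ $m_n = 1$. (Equivalently, the large Dirichlet eigenvalues are simple zeros of~\eqref{defD2}, which can also be extracted from the standard asymptotics of $\Delta$ and $\Delta'$.) Thus \textsc{(Simple)} holds with a suitable $n_0$.

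For \textsc{(Basis2)} I would apply Proposition~\ref{prop:cos} with $\theta_n := \la_n$ ($n \ge 0$, $\la_0 = 0$) and $a := 2\pi$. By~\eqref{asymptla}, $\sqrt{\theta_n} = \tfrac{\pi n}{2\pi} + \varkappa_n$ with $\varkappa_0 = 0$ and $\varkappa_n = \tfrac{\Omega}{\pi n} + o(n^{-1})$ for $n \ge 1$, so $\{ \varkappa_n \} \in l_2$ and~\eqref{asympttheta} is fulfilled; the objects $J$, $\{ \mu_n \}$ of~\eqref{defmu} then coincide with $I$, $\{ m_n \}$ of~\eqref{defI}, and Proposition~\ref{prop:cos} states precisely that $\{ c^{<\nu>}(t, \la_n) \}_{n \in I,\, \nu = \overline{0, m_n - 1}}$ is a Riesz basis in $L_2(0, 2\pi)$, which is \textsc{(Basis2)}. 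The only step that is not entirely routine is the simplicity claim inside \textsc{(Simple)}: it is classical for $L_2$ potentials but should be supported either by a precise citation (e.g.~\cite{Mar77, PT87, FY01}) or by a short derivation from the asymptotic formulas for $\Delta$ and $\Delta'$ together with~\eqref{defD2}.
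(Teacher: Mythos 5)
Your proposal is correct and follows essentially the same route as the paper: \textsc{(Separation)} via the non-vanishing of $\psi(\cdot,\mu)$ from uniqueness for the Cauchy problem at $x=\pi$, \textsc{(Basis2)} via Proposition~\ref{prop:cos} applied with $a = 2\pi$, and \textsc{(Simple)}, \textsc{(Asymptotics)} read off from~\eqref{asymptla}. The paper simply states the last two "easily follow from the asymptotics," so your extra detail (separation of large $\rho_n$ forcing simplicity of large eigenvalues) only fills in what the paper leaves implicit.
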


\begin{proof}
The condition~\textsc{(Basis2)} follows from the asymptotics~\eqref{asymptla} and Proposition~\ref{prop:cos}. \textsc{(Separation)} is fulfilled, because the functions $\psi(\pi, \la)$ and $\psi'(\pi, \la)$ do not have common zeros. Indeed, if $\psi(\pi, \mu) = \psi'(\pi, \mu) = 0$ for some $\mu \in \mathbb C$, then $\psi(x, \mu)$ is the solution of the initial value problem for equation~\eqref{eqv2} with the zero conditions at $x = \pi$. Then $\psi(x, \mu) \equiv 0$, which is impossible. The conditions \textsc{(Simple)} and \textsc{(Asymptotics)} easily follow from the asymptotics~\eqref{asymptla}.
\end{proof}

Thus, our main results can be applied to the Hochstadt-Lieberman problem. In particular, Theorem~\ref{thm:uniq} implies the following corollary, which generalizes the Hochstadt-Lieberman uniqueness theorem \cite{HL78} to the case of complex-valued potentials.

\begin{thm}
Let $\{ \la_n \}_{n = 1}^{\iy}$ and $\{ \tilde \la_n \}_{n = 1}^{\iy}$ be the spectra of the boundary value problems in the form~\eqref{eqv2}-\eqref{dbc} for potentials $q$ and $\tilde q$, respectively. Suppose that $q(x) = \tilde q(x)$ a.e. on $(\pi, 2\pi)$ and $\la_n = \tilde \la_n$ for all $n \ge 1$. Then $q(x) = \tilde q(x)$ a.e. on $(0, \pi)$. In other words, the solution of Inverse Problem~\eqref{ip:HL} is unique.
\end{thm}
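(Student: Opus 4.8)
The plan is to derive the result directly from Theorem~\ref{thm:uniq} by verifying its hypotheses in the present setting. First I would recall that, as shown immediately above, the eigenvalue problem~\eqref{eqv2}-\eqref{dbc} on $(0, 2\pi)$ with $q$ known a priori on $(\pi, 2\pi)$ is equivalent to the problem $L(q)$ of the form~\eqref{eqv}-\eqref{bc} on $(0, \pi)$ with the entire functions $f_1(\la) = \psi(\pi, \la)$, $f_2(\la) = -\psi'(\pi, \la)$ defined by~\eqref{fpsi}, and that these functions are completely determined by the known part $q|_{(\pi, 2\pi)}$; in particular, since $q = \tilde q$ a.e. on $(\pi, 2\pi)$, the functions $f_j$ coincide for the two problems. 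Moreover the constant $\om = \frac12\int_0^{\pi} q(x)\,dx$ is recoverable from the given data, namely $\om = \Omega - \frac12\int_{\pi}^{2\pi} q(x)\,dx$ with $\Omega$ read off from the asymptotics~\eqref{asymptla}; since $\la_n = \tilde\la_n$ for all $n$, these asymptotics give $\Omega = \tilde\Omega$, and since the known parts agree, $\om = \tilde\om$.

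Next I would invoke the preceding Proposition, which establishes that for these specific $f_j$ and for $\{\la_n\}_{n=1}^{\iy}$ taken to be the full spectrum of~\eqref{eqv2}-\eqref{dbc} (with $\la_0 := 0$ adjoined), the conditions~\textsc{(Basis2)}, \textsc{(Separation)}, \textsc{(Simple)} and~\textsc{(Asymptotics)} all hold. By Theorem~\ref{thm:cond}(ii), these four conditions together imply~\textsc{(Basis)}, which in turn implies~\textsc{(Complete)} for $L(q)$ and $\{\la_n\}_{n=0}^{\iy}$. Thus every hypothesis of Theorem~\ref{thm:uniq} is met: $\{\la_n\}$ and $\{\tilde\la_n\}$ are subspectra (indeed the full spectra) of $L(q)$ and $L(\tilde q)$, condition~\textsc{(Complete)} holds, $\la_n = \tilde\la_n$ for $n \ge 1$, and $\om = \tilde\om$.

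Applying Theorem~\ref{thm:uniq} then yields $q = \tilde q$ in $L_2(0, \pi)$, i.e. $q(x) = \tilde q(x)$ a.e. on $(0, \pi)$, which is exactly the assertion that the solution of Inverse Problem~\ref{ip:HL} is unique. I do not expect any genuine obstacle here: the work has been front-loaded into Theorems~\ref{thm:uniq} and~\ref{thm:cond} and the preceding Proposition. The only point requiring a line of care is the bookkeeping that the data of Inverse Problem~\ref{ip:HL} really does furnish everything needed to run Inverse Problem~\ref{ip:main} — that $f_j$ depend only on $q|_{(\pi,2\pi)}$ and hence agree for $q$ and $\tilde q$, and that $\om = \tilde\om$ — but this was already spelled out in the discussion leading up to the Proposition, so the corollary follows at once.
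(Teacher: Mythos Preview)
Your proposal is correct and matches the paper's intended argument: the theorem is stated there as an immediate corollary of Theorem~\ref{thm:uniq}, with the hypotheses supplied by the preceding Proposition and Theorem~\ref{thm:cond}. You have simply written out in full the bookkeeping (equality of $f_j$, $\om = \tilde\om$, and the chain \textsc{(Basis2)}+\textsc{(Separation)}+\textsc{(Simple)}+\textsc{(Asymptotics)} $\Rightarrow$ \textsc{(Basis)} $\Rightarrow$ \textsc{(Complete)}) that the paper leaves implicit.
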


Algorithm~\ref{alg:ip} can be used for constructive solution of Inverse Problem~\ref{ip:main}. This algorithm generalizes the methods, developed in parallel by Martinyuk and Pivovarchik \cite{MP10} and by Buterin \cite{But11} for solving the Hochstadt-Lieberman problem.

\section*{Appendix. Inverse problem by Cauchy data}

\setcounter{section}{5}
\setcounter{equation}{0}
\setcounter{thm}{0}

The goal of this section is to prove the following theorem on local solvability and stability of Inverse Problem~\ref{ip:cd}.

\begin{thm} \label{thm:loc}
Let $q$ be a fixed complex-valued function from $L_2(0, \pi)$, and let $\{ K, N \}$ be the corresponding Cauchy data. Then there exists $\eps > 0$ (depending on $q$) such that, for any functions $\tilde K, \tilde N$ from $L_2(0, \pi)$ satisfying the estimate
\begin{equation} \label{estNK}
    \Xi := \max \{ \| K - \tilde K \|_{L_2(0, \pi)}, \| N - \tilde N \|_{L_2(0, \pi)} \} \le \eps,
\end{equation}
there exists a unique function $\tilde q \in L_2(0, \pi)$ such that $\int_0^{\pi} (q(x) - \tilde q(x)) \, dx = 0$ and $\{ \tilde K, \tilde N \}$ are the Cauchy data for $\tilde q$. In addition,
\begin{equation} \label{estq}
    \| q - \tilde q \|_{L_2(0, \pi)} \le C \Xi,
\end{equation}
where the constant $C$ depends only on $q$ and not on $\{ \tilde K, \tilde N \}$.
\end{thm}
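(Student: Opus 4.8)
The plan is to reduce Inverse Problem~\ref{ip:cd} to the recovery of $q$ from its Weyl function and then to run a perturbation argument on the main equation of the method of spectral mappings, carried out in the non-self-adjoint framework of \cite{FY01, But07, BSY13} (which uses no positivity or self-adjointness). With $\eta_1, \eta_2$ given by \eqref{intS}, \eqref{intSp}, the Weyl function of $q$ is $M(\la) = \eta_2(\la)/\eta_1(\la)$; given $\tilde K, \tilde N$, I define $\tilde\eta_1, \tilde\eta_2$ by the same formulas with $K, N$ replaced by $\tilde K, \tilde N$ and the \emph{same} constant $\om$, and put $\tilde M := \tilde\eta_2/\tilde\eta_1$. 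From the standard bounds on $s(t,\la), c(t,\la)$ and the Cauchy--Schwarz inequality, \eqref{estNK} gives, with $\rho = \sqrt\la$,
$$
|\eta_j(\rho^2) - \tilde\eta_j(\rho^2)| \le \frac{C\,\Xi}{|\rho|}\,\exp(|\mathrm{Im}\,\rho|\,\pi), \qquad j = 1, 2,
$$
together with square-integrability bounds of order $\Xi$ over the real $\rho$-axis. Hence, for $\eps$ small, Rouch\'e's theorem places the zeros $\{\tilde\mu_n\}$ of $\tilde\eta_1$ near the Dirichlet spectrum $\{\mu_n\}$ of $q$ with $\{\mu_n - \tilde\mu_n\}$ small in $l_2$, and the meromorphic difference $\hat M := M - \tilde M$ — whose poles all lie in $\{\mu_n\} \cup \{\tilde\mu_n\}$ — satisfies $|\hat M(\rho^2)| \le C\,\Xi$ along the standard expanding contours $|\la| = r_N$ separated from those zeros; likewise the (generalized) norming constants attached to $\tilde M$ are $\Xi$-close to those of $M$.

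Next I would set up and perturb the main equation. Let $S(x,\la), \Phi(x,\la)$ be the solution of \eqref{eqv} normalized by $S(0,\la) = 0$, $S'(0,\la) = 1$ and the Weyl solution for $q$, and let $\tilde S, \tilde\Phi$ be the analogous objects for a not-yet-constructed potential $\tilde q$ with Weyl function $\tilde M$. Integrating the contour-integral identity linking $\tilde\Phi, \Phi$ with $S, \tilde S$ over $|\la| = r_N$ and letting $N \to \iy$ produces, for each $x$, a linear equation
$$
(I + \tilde R(x))\,\tilde\psi(x) = \psi(x), \qquad x \in [0, \pi],
$$
in a Banach space $B$ of bounded sequences (blocks indexed by the poles of $M$ and $\tilde M$ with their multiplicities), where $\psi(x)$ is assembled from $S(x, \cdot)$ and the spectral data of $q$, and $\tilde R(x)$ is a bounded operator whose explicit kernel is built from $\hat M$, $S(x, \cdot)$ and $\tilde S(x, \cdot)$. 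Combining the estimates of the first step with the uniform asymptotics of $S(x, \la)$ on $[0, \pi]$ gives $\|\tilde R(x)\|_{B \to B} \le C\,\Xi$ for all $x \in [0, \pi]$.

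Now fix $\eps$ so that $C\eps < 1$. Then $I + \tilde R(x)$ is boundedly invertible in $B$, uniformly in $x$, with $\|(I + \tilde R(x))^{-1}\| \le (1 - C\eps)^{-1}$; this yields a unique $\tilde\psi(x) \in B$, and the reconstruction formula of the method of spectral mappings — of the form $\tilde q(x) = q(x) - 2\frac{d}{dx}\varepsilon_0(x)$, with $\varepsilon_0$ a series in the components of $\tilde\psi(x)$ and the spectral data — defines a function $\tilde q \in L_2(0, \pi)$. One then has to carry out the sufficiency verification: that this $\tilde q$ reproduces $\tilde S(\pi, \la) = \tilde\eta_1(\la)$, $\tilde S'(\pi, \la) = \tilde\eta_2(\la)$, so that $\{\tilde K, \tilde N\}$ are indeed its Cauchy data, and that $\frac12\int_0^\pi \tilde q = \om$, i.e. $\int_0^\pi (q - \tilde q)\,dx = 0$, which is forced because $\om$ occupies the leading term of $\tilde\eta_1$. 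Uniqueness of $\tilde q$ is then immediate from the already-known uniqueness of recovery from the Weyl function. Finally, $\tilde\psi(x) - \psi(x) = -(I + \tilde R(x))^{-1}\tilde R(x)\psi(x)$ gives $\|\tilde\psi(x) - \psi(x)\|_B \le C\,\Xi$ uniformly in $x$, and differentiating the reconstruction formula and invoking the $l_2$-bounds on the residues yields $\|q - \tilde q\|_{L_2(0,\pi)} \le C\,\Xi$, which is \eqref{estq}.

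I expect the main obstacle to be twofold and concentrated in the last two paragraphs. First, one must prove the uniform-in-$x$ Lipschitz estimate $\|\tilde R(x)\|_{B\to B} \le C\,\Xi$, which requires carefully tracking the contour integrals against the asymptotics of $S(x, \la)$ over the whole interval and against the precise decay of $\hat M$ and of the norming-constant differences. Second — and this is the more delicate point, since $\{\tilde K, \tilde N\}$ is \emph{not} assumed a priori to be the Cauchy data of any potential — the sufficiency part must be done in full: one has to show that the abstractly reconstructed $\tilde q$ genuinely has $\tilde\eta_1, \tilde\eta_2$ as its $S(\pi, \cdot), S'(\pi, \cdot)$ and the prescribed mean value. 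This is where the non-self-adjoint machinery of \cite{BSY13} does the essential work, replacing spectral-measure positivity by direct analytic estimates.
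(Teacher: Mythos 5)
Your proposal is correct in outline and, at bottom, follows the same route as the paper; the difference is one of packaging. The paper factors the argument into two pieces: Lemma~\ref{lem:KN}, which converts the Cauchy-data perturbation $\Xi$ into a perturbation of the spectral data $\{\theta_n, M_n\}$ (zeros of $\eta_1$ and generalized residues of $M = \eta_2/\eta_1$) via Rouch\'e's theorem on the fixed contour $\ga_0$ and on small circles around the simple zeros, a Taylor expansion at the perturbed zeros, and Bessel's inequality for the Fourier coefficients of $K - \tilde K$ and $N - \tilde N$; and then a direct citation of Proposition~\ref{prop:gsd} from \cite{Bond20}, which is precisely the ``main equation'' machinery of the method of spectral mappings that occupies the second half of your proposal (including the sufficiency verification and the uniform invertibility of $I + \tilde R(x)$). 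So your first step reproduces Lemma~\ref{lem:KN} and your second step re-derives the cited external result rather than invoking it. Two points deserve care if you carry your version out. First, the final bound \eqref{estq} needs the \emph{weighted} estimate \eqref{estxi}, i.e. $\bigl( \sum_n (n|\nu_n - \tilde\nu_n|)^2 \bigr)^{1/2} \le C\Xi$ and $\bigl( \sum_n (n^{-1}|M_n - \tilde M_n|)^2 \bigr)^{1/2} \le C\Xi$; your phrase ``small in $l_2$'' is consistent with this if you measure the eigenvalue differences in the $\la$-variable, but the weights must be tracked explicitly, since a crude sup bound $|\hat M(\rho^2)| \le C\Xi$ on expanding contours is not enough to close the $L_2$ estimate on $q - \tilde q$. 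Second, the finitely many possibly multiple eigenvalues are handled in the paper by controlling $\max_{\la \in \ga_0} |M - \tilde M|$ on a fixed contour enclosing all of them, which sidesteps the question of how multiplicities split under perturbation; your block-indexed Banach space does the same job but should be set up with that splitting in mind. The paper's decomposition has the advantage of isolating what is genuinely new here (the passage from Cauchy data to Weyl data) from what is already established in \cite{Bond20}; your inline version is more self-contained but, as you acknowledge, leaves the hardest part (the sufficiency verification that the reconstructed $\tilde q$ actually has $\tilde\eta_1, \tilde\eta_2$ as its $S(\pi,\cdot), S'(\pi,\cdot)$) as a program rather than a proof.
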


Below the symbol $C$ is used for various positive constants. In order to prove Theorem~\ref{thm:loc}, we need several auxiliary propositions.
Applying the standard approach (see, e.g., \cite[Theorem~1.1.3]{FY01}), based on Rouch\'{e}'s Theorem, one can easily obtain the following result.

\begin{prop} \label{prop:dir}
Let $K(t)$ be an arbitrary complex-valued function from $L_2(0, \pi)$. Then the function $\eta_1(\la)$ defined by~\eqref{intS} has the countable set of zeros $\{ \theta_n \}_{n = 1}^{\iy}$ numbered according to their multiplicities so that $|\theta_n| \le |\theta_{n+1}|$, $n \in \mathbb N$, and satisfying the asymptotic formula
\begin{equation} \label{asymptnu}
    \nu_n := \sqrt{\theta_n} = n + O\left( n^{-1} \right), \quad n \in \mathbb N.
\end{equation}
\end{prop}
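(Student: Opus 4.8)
The plan is to reduce the statement to a classical application of Rouché's theorem, exactly as in the standard references, but keeping track of the fact that $K \in L_2(0,\pi)$ is only square-integrable (not continuous). First I would write out the representation~\eqref{intS} in terms of $\rho = \sqrt{\la}$: since $\la\,\eta_1(\la) = s(\pi,\la) - \om\,c(\pi,\la) + \int_0^{\pi} K(t) c(t,\la)\,dt$, multiplying by $\rho^{-2}$ and using $s(\pi,\rho^2) = \rho\sin(\rho\pi)$, $c(\pi,\rho^2)=\cos(\rho\pi)$, $c(t,\rho^2)=\cos(\rho t)$, one obtains
$$
\eta_1(\rho^2) = \frac{\sin(\rho\pi)}{\rho} - \frac{\om\cos(\rho\pi)}{\rho^2} + \frac{1}{\rho^2}\int_0^{\pi} K(t)\cos(\rho t)\,dt.
$$
The leading term is $\rho^{-1}\sin(\rho\pi)$, whose zeros are $\rho = n$, $n \in \mathbb Z$. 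The correction terms are $O(\rho^{-2}\exp(|\mathrm{Im}\,\rho|\pi))$: the $\om$-term obviously, and the integral term because, by the Riemann–Lebesgue lemma applied to $\rho$ real and a crude $L_2$/Paley–Wiener bound for complex $\rho$, $\int_0^{\pi} K(t)\cos(\rho t)\,dt = o(\exp(|\mathrm{Im}\,\rho|\pi))$ as $|\rho| \to \iy$.

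Next I would apply Rouché's theorem on the contours $\Gamma_n = \{\,\rho : |\rho| = n + \tfrac12\,\}$ and on small fixed circles $\gamma_n = \{\,|\rho - n| = \de\,\}$. On these contours the leading term $\rho^{-1}\sin(\rho\pi)$ is bounded below by $c\exp(|\mathrm{Im}\,\rho|\pi)/|\rho|$ for a positive constant $c$, while the remainder is $O(\exp(|\mathrm{Im}\,\rho|\pi)/|\rho|^2)$, hence strictly smaller for $|\rho|$ large. Therefore $\eta_1(\rho^2)$ has inside $\Gamma_n$ exactly as many zeros (counted with multiplicity) as $\rho^{-1}\sin(\rho\pi)$, namely $2n$ (the zeros $\rho = \pm 1, \dots, \pm n$; note $\rho = 0$ is not a zero since $\eta_1$ is entire and $\eta_1(0) = \pi - \om + \int_0^\pi K$ generically, and in any case $\rho^{-1}\sin(\rho\pi)$ has no zero at $0$ either). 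Since $\eta_1$ is even in $\rho$, its zeros in $\la$ come from pairs $\pm\rho$, so one gets one zero $\theta_n$ near $n^2$ for each large $n$; similarly each small circle $\gamma_n$ around $\rho = n$ contains exactly one zero of $\eta_1(\rho^2)$ for $n \ge n_1$. Writing $\nu_n = n + \de_n$ with $|\de_n| < \de$ and substituting back into the equation $\eta_1(\nu_n^2) = 0$, i.e. $\sin(\nu_n\pi) = O(\nu_n^{-1})$, gives $\sin(\de_n\pi) = O(n^{-1})$, hence $\de_n = O(n^{-1})$, which is~\eqref{asymptnu}. The finitely many remaining zeros (those with $|\rho| \le n_1$) are added to the list in arbitrary order and the enumeration is arranged so that $|\theta_n| \le |\theta_{n+1}|$; this only perturbs finitely many indices and does not affect the asymptotics.

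The main obstacle — really the only point that requires care beyond the textbook argument — is the decay of the integral term $\int_0^\pi K(t)\cos(\rho t)\,dt$ for $K$ merely in $L_2$, uniformly for $\rho$ on the contours $\Gamma_n$. For real $\rho$ this is the Riemann–Lebesgue lemma; for $\rho$ off the real axis one splits $\cos(\rho t) = \tfrac12(e^{i\rho t} + e^{-i\rho t})$ and estimates each exponential integral by Cauchy–Schwarz, $\bigl|\int_0^\pi K(t) e^{\pm i\rho t}\,dt\bigr| \le \|K\|_{L_2(0,\pi)}\bigl(\int_0^\pi e^{\mp 2\,\mathrm{Im}\,\rho\, t}\,dt\bigr)^{1/2} \le C\|K\|_{L_2}\exp(|\mathrm{Im}\,\rho|\pi)$, which already gives the $O(\exp(|\mathrm{Im}\,\rho|\pi))$ bound; to get the extra gain one notes that $\int_0^\pi K\cos(\rho\,\cdot)$ is, after the change $t \mapsto \pi - t$, the Fourier–type transform of an $L_2$ function and tends to $0$ along any horizontal strip $|\mathrm{Im}\,\rho| \le h$ by density of smooth functions. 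Since on $\Gamma_n$ and $\gamma_n$ the imaginary part can be large, the clean way is to divide through by $\exp(|\mathrm{Im}\,\rho|\pi)$ first and then run Rouché with the estimates above; this is exactly the scheme of \cite[Theorem~1.1.3]{FY01}, so I would simply invoke that reference for the routine part and only spell out the $L_2$ bound on the integral. Everything else is bookkeeping.
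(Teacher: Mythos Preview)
Your proposal is correct and follows precisely the route the paper indicates: the paper does not give a detailed proof but simply refers to ``the standard approach (see, e.g., \cite[Theorem~1.1.3]{FY01}), based on Rouch\'e's Theorem'', and your sketch is exactly that argument. One minor remark: your ``main obstacle'' is overstated --- since the integral term already carries a factor $\rho^{-2}$, the crude bound $\bigl|\int_0^\pi K(t)\cos(\rho t)\,dt\bigr|\le \|K\|_{L_1(0,\pi)}\exp(|\mathrm{Im}\,\rho|\pi)$ is all that is needed for Rouch\'e, with no appeal to Riemann--Lebesgue or any additional decay.
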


In view of the asymptotic formula \eqref{asymptnu}, we can find the smallest integer $n_1 \ge 2$ such that the zeros $\{ \theta_n \}$ are simple for $n \ge n_1$ and $|\theta_{n_1}| > |\theta_{n_1 - 1}|$. Define the contour $\ga_0 := \{ \la \in \mathbb C \colon |\la| = (|\theta_{n_1}| + |\theta_{n_1-1}|)/2 \}$. Clearly, $\theta_n \in \mbox{int}\, \ga_0$ for $n = \overline{1, n_1-1}$ and the eigenvalues $\{ \theta_n \}_{n = n_1}^{\iy}$ lie strictly outside $\ga_0$.

Without loss of generality, we may assume that equal eigenvalues in the sequence $\{ \theta_n \}_{n = 1}^{\iy}$ are consecutive. Introduce the notations
\begin{gather*}
\mathcal S := \{ 1 \} \cup \{ n \ge 2 \colon \la_n \ne \la_{n-1} \}, \quad k_n := \# \{ k \in \mathbb N \colon \theta_k = \theta_n \}, \\
M(\la) := \frac{\eta_2(\la)}{\eta_1(\la)}, \quad M_{n + \nu} := \Res_{\la = \theta_n} (\la - \theta_n)^{\nu} M(\la), \quad n \in \mathcal S, \quad \nu = \overline{0, k_n-1}.
\end{gather*}
Below we agree that, if a certain symbol $\ga$ denotes an object constructed by $\{ K, N, \om \}$, then the symbol $\tilde \ga$ with tilde denotes the analogous object constructed by $\{ \tilde K, \tilde N, \om\}$.

\begin{lem} \label{lem:KN}
Let $K$, $N$ be fixed complex-valued functions from $L_2(0, \pi)$, and let $\om \in \mathbb C$. Then there exists $\eps > 0$ (depending on $K$, $N$, $\om$) such that, for any $\tilde K, \tilde N \in L_2(0, \pi)$ satisfying~\eqref{estNK}, the points $\{ \tilde \theta_n \}_{n = 1}^{n_1-1}$ lie strictly inside $\ga_0$ and
\begin{equation} \label{estM}
    \max_{\la \in \ga_0} |M(\la) - \tilde M(\la)| \le C \Xi.
\end{equation}
For $n \ge n_1$, we have $\tilde k_n = 1$ and
\begin{equation} \label{estxi}
    \left( \sum_{n = n_1}^{\iy} (n \xi_n)^2 \right)^{1/2} \le C \Xi,
\end{equation}
where $\xi_n := |\nu_n - \tilde \nu_n| + \frac{1}{n^2} |M_n - \tilde M_n|$. The constant $C$ in the estimates~\eqref{estM} and~\eqref{estxi} depends only on $\{ K, N, \om \}$.
\end{lem}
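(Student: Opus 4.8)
The plan is to follow the classical scheme for stability of the inverse Sturm--Liouville problem (cf.\ \cite{FY01, BSY13}), but carried out directly at the level of the Cauchy data. The basic observation is that, subtracting the representations \eqref{intS} and \eqref{intSp} written for $q$ and $\tilde q$ (with the same $\om$), the differences of the characteristic quantities become explicit Fourier-type transforms of $K - \tilde K$ and $N - \tilde N$:
\[
\la(\eta_1 - \tilde\eta_1)(\la) = \int_0^\pi (K - \tilde K)(t)\, c(t,\la)\, dt, \qquad \la(\eta_2 - \tilde\eta_2)(\la) = \int_0^\pi (N - \tilde N)(t)\, s(t,\la)\, dt ,
\]
and likewise for the $\la$-derivatives, which produce the same transforms with $t\sin(\rho t)$, $t\cos(\rho t)$ in place of $\cos(\rho t)$, $\sin(\rho t)$. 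The Cauchy--Schwarz inequality gives the pointwise bounds $|(\eta_1 - \tilde\eta_1)(\rho^2)| \le C\Xi |\rho|^{-2} e^{|\mbox{Im}\,\rho|\pi}$ and $|(\eta_2 - \tilde\eta_2)(\rho^2)| \le C\Xi |\rho|^{-1} e^{|\mbox{Im}\,\rho|\pi}$. The essential point, which will feed \eqref{estxi}, is that evaluating these transforms at the points $\rho = \nu_n$ or $\rho = \tilde\nu_n$ produces $l_2$-sequences of norm $\le C\Xi$: since $\nu_n = n + O(n^{-1})$ and $\tilde\nu_n = n + O(n^{-1})$ by Proposition~\ref{prop:dir} (with the $O$-constant uniform for $\tilde K$ close to $K$), the systems $\{\cos(\nu_n t)\}$, $\{\sin(\nu_n t)\}$, $\{t\cos(\nu_n t)\}$, $\{t\sin(\nu_n t)\}$ are $l_2$-perturbations in $L_2(0,\pi)$ of the orthogonal basis $\{\cos nt\}$, resp.\ of Bessel systems, hence themselves remain Bessel.

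Next I would localize the zeros via Rouch\'e's theorem. On the compact contour $\ga_0$ the function $\eta_1$ is bounded away from $0$ while $|\eta_1 - \tilde\eta_1| \le C\Xi$, so for $\Xi$ small $\tilde\eta_1$ still has exactly $n_1 - 1$ zeros inside $\ga_0$. For large $n$, the expansion $\rho^2\eta_1(\rho^2) = \rho\sin\rho\pi + O(e^{|\mbox{Im}\,\rho|\pi})$ yields $|\eta_1(\rho^2)| \ge c n^{-1}$ on the circle $\{|\rho - n| = \delta\}$ for a fixed small $\delta$, whereas $|(\eta_1 - \tilde\eta_1)(\rho^2)| \le C\Xi n^{-2}$ there; Rouch\'e's theorem then places a single simple zero $\tilde\theta_n = \tilde\nu_n^2$ of $\tilde\eta_1$ inside this circle, so $\tilde k_n = 1$, and, together with the asymptotics of the zeros of $\tilde\eta_1$, this accounts for all of them and fixes the pairing $\theta_n \leftrightarrow \tilde\theta_n$; the finitely many $n \ge n_1$ not covered by the asymptotic estimate are handled exactly as the zeros inside $\ga_0$.

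The last step transfers the bounds to the Weyl function and to the differences $\nu_n - \tilde\nu_n$, $M_n - \tilde M_n$. From $M - \tilde M = \big(\eta_2(\tilde\eta_1 - \eta_1) + \eta_1(\eta_2 - \tilde\eta_2)\big)/(\eta_1\tilde\eta_1)$ and $|\eta_1|, |\tilde\eta_1| \ge c_0$ on $\ga_0$ one gets \eqref{estM}. Writing $\rho^2\eta_1(\rho^2) = \rho\sin\rho\pi + h(\rho)$ with $|h(\rho)|, |h'(\rho)| \le C$ on $\{|\rho-n|=\delta\}$ and $\big|\tfrac{d}{d\rho}(\rho\sin\rho\pi)\big| \asymp n$ there, a short implicit-function argument comparing $\rho\sin\rho\pi + h(\rho) = 0$ with its tilde-analogue gives $n|\nu_n - \tilde\nu_n| \le C\big|\int_0^\pi (K - \tilde K)(t)\cos(\nu_n t)\,dt\big|$, an $l_2$-sequence of norm $\le C\Xi$. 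For $M_n - \tilde M_n$ I would use that, by $\tilde k_n = 1$, $M_n = \eta_2(\theta_n)/\dot\eta_1(\theta_n)$ (dot $=d/d\la$), with $|\dot\eta_1(\theta_n)| \asymp n^{-2}$, $|\eta_2(\theta_n)| \le C$; expanding the difference, estimating $\dot\eta_1(\theta_n) - \dot{\tilde\eta}_1(\theta_n)$ and $(\eta_2 - \tilde\eta_2)(\theta_n)$ through the $l_2$-sequences of the first step (Cauchy's integral formula for the derivative), and controlling the mismatch between $\theta_n$ and $\tilde\theta_n$ by $|\nu_n - \tilde\nu_n|$ using $|\ddot\eta_1| \asymp n^{-4}$ and $|\dot\eta_2| \asymp n^{-1}$, I expect to reach $n^{-1}|M_n - \tilde M_n| \le C(\alpha_n + n|\nu_n - \tilde\nu_n|)$ with $\{\alpha_n\} \in l_2$, $\|\{\alpha_n\}\|_{l_2} \le C\Xi$. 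Combined with the bound on $\nu_n - \tilde\nu_n$, this yields $n\xi_n \le C\beta_n$ with $\{\beta_n\} \in l_2$, $\|\{\beta_n\}\|_{l_2} \le C\Xi$, which is \eqref{estxi}.

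The hard part is \eqref{estxi}, not the localization: the Cauchy--Schwarz bounds in isolation only give the term-by-term estimate $n\xi_n \le C\Xi$, which is far too weak for $l_2$-summability. The real content is to keep every quantity occurring in the computation in the form of an inner product $\int_0^\pi (K - \tilde K)(t)\varphi(t)\,dt$ or $\int_0^\pi (N - \tilde N)(t)\varphi(t)\,dt$ with $\varphi$ running over an $l_2$-perturbation of one of the Bessel systems $\{\cos nt\}$, $\{\sin nt\}$, $\{t\cos nt\}$, $\{t\sin nt\}$ of $L_2(0,\pi)$, and to check that each ``off-diagonal'' correction --- caused by $\theta_n \ne \tilde\theta_n$ and by the nonlinear dependence of the characteristic quantities on the data --- carries enough decay in $n$, or a factor $\Xi$, to be absorbed once $\Xi$ is taken small enough.
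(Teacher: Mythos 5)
Your proposal follows essentially the same route as the paper: Rouch\'e's theorem on the fixed contour $\ga_0$ and on small circles around the large zeros to localize $\{\tilde\theta_n\}$ and get $\tilde k_n = 1$, a Taylor/implicit-function step giving $n|\nu_n - \tilde\nu_n|$ in terms of $\int_0^\pi \hat K(t)\cos(\tilde\nu_n t)\,dt$, the residue formula with the quantitative bounds $|\dot\eta_1(\theta_n)| \asymp n^{-2}$, $|\eta_2(\theta_n)| \le C$ for $M_n - \tilde M_n$, and Bessel-type inequalities to convert the resulting Fourier-coefficient sequences into the $l_2$ bound \eqref{estxi}; you also correctly identify the $l_2$-summability (rather than the localization) as the real content. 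The only cosmetic difference is that you invoke Bessel systems $\{\cos(\nu_n t)\}$, $\{t\sin(\nu_n t)\}$ directly, while the paper reduces to the exact Fourier coefficients $\hat K_n$, $\hat N_n$, $\hat L_n$ plus $O(\Xi/n)$ corrections and applies Bessel's inequality for $\{\cos nt\}$, $\{\sin nt\}$ --- these are equivalent, and your explicit treatment of the $\theta_n$ versus $\tilde\theta_n$ evaluation mismatch in $M_n - \tilde M_n$ is, if anything, slightly more careful than the paper's formula~\eqref{subMn}.
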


\begin{proof} \textsf{Step 1.} If the functions $K$, $N$, $\tilde K$, $\tilde N$ and the number $\om$ satisfy the conditions of the lemma for sufficiently small $\eps > 0$, then~\eqref{intS} and~\eqref{estNK} yield the estimates
\begin{gather} \label{eta1below}
    |\eta_1(\la)|, |\tilde \eta_1(\la)| \ge c_0 > 0, \quad \la \in \ga_0, \\ \nonumber
    |\eta_1(\la) - \tilde \eta_1(\la)| \le C \Xi, \quad \la \in \ga_0.
\end{gather}
Consequently, for sufficiently small $\eps > 0$, we have $\dfrac{|\eta_1(\la) - \tilde \eta_1(\la)|}{|\eta_1(\la)|} < 1$ on $\ga_0$. By Rouch\'{e}'s Theorem, the function $\tilde \eta_1(\la)$ has inside $\ga_0$ the same number of zeros as $\eta_1(\la)$. According to our notations, these zeros of $\tilde \eta_1(\la)$ are $\{ \tilde \theta_n \}_{n = 1}^{n_1 - 1}$.

Using~\eqref{intS}, \eqref{estNK} and~\eqref{eta1below}, we obtain the estimate~\eqref{estM}:
$$
|M(\la) - \tilde M(\la)| = \frac{|\eta_2(\la) \tilde \eta_1(\la) - \tilde \eta_2(\la) \eta_1(\la)|}{|\eta_1(\la)| |\tilde \eta_1(\la)|} \le C \Xi, \quad \la \in \ga_0.
$$

\textsf{Step 2.} For $n \ge n_1$, consider the contours $\ga_{n, r} := \{ \rho \in \mathbb C \colon |\rho - \nu_n| = r \}$, where $r > 0$ is fixed and so small that $r \le \frac{|\nu_n - \nu_{n+1}|}{2}$, $n \ge n_1$. The function $\eta_0(\rho^2)$ has exactly one zero $\nu_n \in \mbox{int}\, \ga_{n, r}$ in $\rho$-plane for every $n \ge n_1$. The relations~\eqref{intS} and~\eqref{estNK} yield the estimate 
\begin{equation} \label{sm1}
|\eta_1(\rho^2)| \ge \frac{c_r}{n}, \quad \rho \in \ga_{n, r}, \quad n \ge n_1,
\end{equation}
where the constant $c_r$ depends on $r$ and not on $\rho$ and $n$. For sufficiently small $\eps > 0$, we obtain the estimate
\begin{equation} \label{sm2}
|\tilde \eta_1(\rho^2) - \eta_1(\rho^2)| \le \frac{C \Xi}{n^2}, \quad \rho \in \ga_{n, r}, \quad n \ge n_1.
\end{equation}
Using \eqref{sm1}, \eqref{sm2} and applying Rouch\'{e}'s Theorem to the contour $\ga_{n,r}$ in $\rho$-plane, we conclude that $\tilde \eta_1(\rho^2)$ has exactly one zero $\tilde \nu_n \in \mbox{int}\, \ga_{n, r}$ for each $n \ge n_1$.

Using the Taylor formula
$$
\eta_1(\tilde \nu_n^2) = \eta_1(\nu_n^2) + \frac{d}{d\rho} \eta_1(\rho^2)_{|\rho = \zeta_n} (\tilde \nu_n - \nu_n), \quad \zeta_n \in \mbox{int} \, \ga_{n, r},
$$
and~\eqref{intS}, we derive the relation
\begin{equation} \label{sm3}
\eta_1(\tilde \nu_n^2) - \tilde \eta_1(\tilde \nu_n^2) = \frac{1}{\tilde \nu_n^2} \int_0^{\pi} \hat K(t) \cos (\tilde \nu_n t) \, dt = \frac{d}{d\rho} \eta_1(\rho^2)_{|\rho = \zeta_n} (\tilde \nu_n - \nu_n),
\end{equation}
where $\hat K := K - \tilde K$. It is easy to check that
\begin{equation} \label{sm4}
    \left| \frac{d}{d\rho}\eta_1(\rho^2)\right| \ge \frac{C}{n}, \quad \rho \in \mbox{int}\, \ga_{n, r}, \quad n \ge n_1.
\end{equation}
Using \eqref{asymptnu} and \eqref{estNK}, we estimate the integral:
\begin{align} \nonumber
\left| \int_0^{\pi} \hat K(t) \cos (\tilde \nu_n t) \, dt\right| & \le \left| \int_0^{\pi} \hat K(t) \cos (nt) \, dt\right| + \left| \int_0^{\pi} \hat K(t) (\cos (\tilde \nu_n t) - \cos (n t)) \, dt\right| \\ \label{sm5} & \le |\hat K_n| + \frac{C \Xi}{n}, \quad n \ge n_1, \quad \hat K_n := \int_0^{\pi} \hat K(t) \cos (nt) \, dt.
\end{align}
Combining~\eqref{sm3}, \eqref{sm4} and~\eqref{sm5}, we obtain
\begin{equation} \label{sm6}
|\tilde \nu_n - \nu_n| \le \frac{C |\hat K_n|}{n} + \frac{C \Xi}{n^2}, \quad n \ge n_1.
\end{equation}
Bessel's inequality for the Fourier coefficients $\{ \hat K_n \}$ and~\eqref{estNK} imply that
\begin{equation} \label{bessel}
\left( \sum_{n = n_1}^{\iy} |\hat K_n|^2 \right)^{1/2} \le C \Xi.
\end{equation}
Combining~\eqref{sm6} and~\eqref{bessel}, we arrive at the estimate
\begin{equation} \label{sumnu}
\left( \sum_{n = n_1}^{\iy}n^2 |\tilde \nu_n - \nu_n|^2 \right)^{1/2} \le C \Xi.
\end{equation}

\textbf{Step 3.} Note that $\{ \theta_n \}_{n = n_1}^{\iy}$ are simple poles of $M(\la)$, so
$$
M_n = \Res_{\la = \theta_n} M(\la) = \frac{\eta_2(\theta_n)}{\dot \eta_1(\theta_n)}, \quad n \ge n_1,
$$
where $\dot f(\la) = \frac{d}{d\la} f(\la)$. 
If $\eps > 0$ is sufficiently small, the analogous relation is valid for $\tilde M_n$, $n \ge n_1$.
Hence
\begin{equation} \label{subMn}
\tilde M_n - M_n = \frac{(\tilde \eta_2 - \eta_2) \dot \eta_1 + \eta_2 (\dot \eta_1 - \dot {\tilde \eta}_1)}{\dot \eta_1 \dot{\tilde \eta}_1}_{|\la = \theta_n}, \quad n \ge n_1.
\end{equation}
Using~\eqref{subMn} and the following estimates
\begin{gather*}
    |\dot \eta_1(\theta_n)| \ge \frac{C}{n^2}, \quad 
    |\dot {\tilde\eta}_1(\theta_n)| \ge \frac{C}{n^2}, \quad |\dot \eta_1(\theta_n)| \le \frac{C}{n^2}, \quad |\eta_2(\theta_n)| \le C, \\
     \quad |\tilde \eta_2(\theta_n) - \eta_2(\theta_n)| \le \frac{C |\hat N_n|}{n} + \frac{C \Xi}{n^2}, \quad 
     |\dot \eta_1(\theta_n) - \dot {\tilde \eta}_1(\theta_n)| \le \frac{C |\hat L_n|}{n^3} + \frac{C \Xi}{n^4}, \quad n \ge n_1,
\end{gather*}
where 
$$
\hat N_n := \int_0^{\pi} \hat N(t) \sin (n t) \, dt, \quad \hat N := N - \tilde N, \quad
\hat L_n := \int_0^{\pi} t \hat K(t) \sin (n t) \, dt,
$$
we arrive at the estimate
$$
|\tilde M_n - M_n| \le C n (|\hat N_n| + |\hat L_n|) + C \Xi, \quad n \ge n_1.
$$
Similarly to~\eqref{sumnu}, we obtain
\begin{equation} \label{sumMn}
\left( \sum_{n = n_1}^{\iy} n^{-2} |\tilde M_n - M_n|^2 \right)^{1/2} \le C \Xi.
\end{equation}
The relations~\eqref{sumnu} and~\eqref{sumMn} together imply \eqref{estxi}.
\end{proof}

In \cite{BSY13, Bond20} the following inverse problem has been studied.

\begin{ip} \label{ip:gsd}
Given the data $\{ \theta_n, M_n \}_{n = 1}^{\iy}$, find $q$.
\end{ip}

Clearly, Inverse Problem~\ref{ip:gsd} is equivalent to Inverse Problem~\ref{ip:cd} by the Cauchy data. In addition, one can uniquely construct $M(\la)$ by $\{ \theta_n, M_n \}_{n = 1}^{\iy}$ and vice versa. In \cite{Bond20}, the following proposition on local solvability and stability of Inverse Problem~\ref{ip:gsd} has been proved.

\begin{prop} \label{prop:gsd}
Let $q \in L_2(0, \pi)$ be fixed. Then there exists $\eps > 0$ (depending on $q$) such that, for any complex numbers $\{ \tilde \theta_n, \tilde M_n \}_{n = 1}^{\iy}$ satisfying the estimate
$$
\Omega := \max \left\{ \max_{\la \in \ga_0} |M(\la) - \tilde M(\la)|, \biggl( \sum_{n = n_1}^{\iy} (n \xi_n)^2 \biggr)^{1/2} \right\} \le \eps,
$$
there exists the unique complex-valued function $\tilde q \in L_2(0, \pi)$ being the solution of Inverse Problem~\ref{ip:gsd} for $\{ \tilde \theta_n, \tilde M_n \}_{n = 1}^{\iy}$. Moreover, the estimate~\eqref{estq} holds with the constant $C$ depending only on $q$.
\end{prop}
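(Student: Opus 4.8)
The plan is to establish Proposition~\ref{prop:gsd} by the method of spectral mappings, which reduces the inverse problem by the generalized spectral data $\{\theta_n, M_n\}$ to a single linear equation in a Banach space of bounded sequences whose coefficients depend continuously on the data. The reference potential $q$ corresponds to the trivially solvable (identity) instance of this equation; the perturbed potential $\tilde q$ is then produced by solving the nearby equation via a Neumann series, and the stability bound falls out of the operator norm of the perturbation.

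First I would fix the reference problem $L(q)$, introduce the solution $S(x,\la)$ and the Weyl solution of equation~\eqref{eqv} on $(0,\pi)$ whose behaviour at $x=\pi$ encodes $M(\la)=\eta_2(\la)/\eta_1(\la)$, and recall that the poles of $M$ are exactly the zeros $\{\theta_n\}$ of $\eta_1$ with the coefficients $\{M_n\}$ as their Laurent data, so that $\{\theta_n,M_n\}$ is equivalent to $M(\la)$. For the pair $L(q)$, $L(\tilde q)$ I would form the contour-integral representation of the difference of the transformed solutions through $\hat M(\la):=M(\la)-\tilde M(\la)$, integrated over a contour enclosing the zeros of $\eta_1$. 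Splitting that contour into $\ga_0$, which surrounds the finitely many and possibly multiple zeros $\{\theta_n\}_{n=1}^{n_1-1}$, and small circles about the simple zeros $\{\theta_n\}_{n\ge n_1}$, and evaluating residues on the latter, yields the \emph{main equation} of the inverse problem
\[
\bigl(I+\tilde{\mathcal H}(x)\bigr)\tilde\psi(x)=\psi(x),\qquad x\in[0,\pi],
\]
where $\psi(x)$ is the known vector of blocks built from $S^{<\nu>}(x,\theta_n)$, and $\tilde{\mathcal H}(x)$ is a bounded operator whose entries are explicit bilinear functionals of such blocks weighted by the differences $\theta_n-\tilde\theta_n$, $M_n-\tilde M_n$ together with the boundary contribution $\max_{\la\in\ga_0}|\hat M(\la)|$. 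Working with $M$ on $\ga_0$ rather than with individual residues is precisely what lets the argument tolerate multiple eigenvalues inside $\ga_0$ in the non-self-adjoint setting.

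The heart of the proof is then a pair of uniform estimates in the weighted sequence space. Using the asymptotics~\eqref{asymptnu} for $\{\nu_n\}$ and the standard bounds on $S^{<\nu>}(x,\theta_n)$, I would show $\|\tilde{\mathcal H}(x)\|\le C\Omega$ uniformly in $x\in[0,\pi]$; the two terms in $\Omega$ control respectively the $\ga_0$-block and the tail built from $\{\xi_n\}_{n\ge n_1}$, and this is where the hypothesis $\bigl(\sum (n\xi_n)^2\bigr)^{1/2}\le\eps$ enters, the weight $n$ compensating for the growth of the blocks. Consequently, choosing $\eps$ so small that $C\eps\le\tfrac12$, the operator $I+\tilde{\mathcal H}(x)$ is boundedly invertible uniformly in $x$ by the Neumann series, with $\|(I+\tilde{\mathcal H}(x))^{-1}\|\le 2$, which gives existence and uniqueness of $\tilde\psi(x)$ and hence of the reconstructed $\tilde q$ solving Inverse Problem~\ref{ip:gsd}.

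Finally I would recover the potential by the reconstruction formula $\tilde q(x)=q(x)-2\dfrac{d}{dx}\sigma_0(x)$, where $\sigma_0(x)$ is a bilinear series in the components of $\psi(x)$ and $\tilde\psi(x)$, and propagate smallness: from $\|\tilde\psi(x)-\psi(x)\|\le C\Omega$ and the analogous bound for the $x$-derivatives I would obtain $\|q-\tilde q\|_{L_2(0,\pi)}\le C\Omega$, which is the stability estimate~\eqref{estq}. The main obstacle I anticipate is technical but genuinely delicate: establishing the uniform-in-$x$ operator bound $\|\tilde{\mathcal H}(x)\|\le C\Omega$ in the correct weighted norm while simultaneously handling the possibly multiple, clustered zeros $\{\theta_n\}_{n=1}^{n_1-1}$ inside $\ga_0$. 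Since self-adjointness is unavailable, no eigenfunction orthogonality can simplify these blocks, so every bound must come from the analytic estimates on $S(x,\la)$ and from the contour representation, and one must verify that all constants depend only on $q$, through $n_1$ and the reference data, and not on the particular perturbation $\{\tilde\theta_n,\tilde M_n\}$.
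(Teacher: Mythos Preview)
The paper does not prove Proposition~\ref{prop:gsd}: it is quoted as a known result from~\cite{Bond20} and merely combined with Lemma~\ref{lem:KN} to deduce Theorem~\ref{thm:loc}. So there is no ``paper's own proof'' to compare against here.

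That said, your sketch is the right one and is precisely the method of spectral mappings that~\cite{Bond20} (and, for the self-adjoint case,~\cite{FY01}) uses: derive the main equation $(I+\tilde{\mathcal H}(x))\tilde\psi(x)=\psi(x)$ in a weighted sequence space, control the finitely many possibly multiple eigenvalues inside $\gamma_0$ through the contour term $\max_{\lambda\in\gamma_0}|M-\tilde M|$, bound the tail operator by $C\bigl(\sum_{n\ge n_1}(n\xi_n)^2\bigr)^{1/2}$, invert by Neumann series for small $\Omega$, and recover $\tilde q$ from $\tilde\psi$ via the reconstruction formula with the stability bound following. The obstacle you flag --- the uniform-in-$x$ operator estimate in the non-self-adjoint case without orthogonality --- is genuine, and is exactly why the contour $\gamma_0$ and the quantity $\max_{\lambda\in\gamma_0}|M-\tilde M|$ replace the individual low-index residues in the definition of $\Omega$. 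Your plan matches the cited argument; nothing is missing at the level of strategy.
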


Lemma~\ref{lem:KN} and Proposition~\ref{prop:gsd} together imply Theorem~\ref{thm:loc}.

\medskip

{\bf Acknowledgments.} This work was supported by Grant 20-31-70005 of the Russian Foundation for Basic Research.

\noindent Natalia Pavlovna Bondarenko \\
1. Department of Applied Mathematics and Physics, Samara National Research University, \\
Moskovskoye Shosse 34, Samara 443086, Russia, \\
2. Department of Mechanics and Mathematics, Saratov State University, \\
Astrakhanskaya 83, Saratov 410012, Russia, \\
e-mail: {\it BondarenkoNP@info.sgu.ru}

\end{document}